\theoremstyle:=definition,remark,plain\do{%
        \expandafter\g@addto@macro\csname th@\theoremstyle\endcsname{%
            \addtolength\thm@preskip\parskip
            }%
        }
\newtheorem{theorem}{Theorem}[section]
\newtheorem{lemma}[theorem]{Lemma}
\newtheorem{defi}[theorem]{Definition}
\newtheorem{claim}[theorem]{Claim}
\newcommand{\arrows}{\hookrightarrow}
\title{A Strengthening of the Erd\H{o}s--Szekeres Theorem}
\author{%
  J\'ozsef Balogh \footnote{Department of Mathematics, University of Illinois Urbana-Champaign, Urbana, Illinois 61801, USA, and Moscow Institute of Physics and Technology, Russian Federation. E-mail: \texttt{jobal@illinois.edu}. Research is partially supported by NSF Grant DMS-1764123, NSF RTG grant DMS 1937241, Arnold O. Beckman Research
Award (UIUC Campus Research Board RB 18132), the Langan Scholar Fund (UIUC), and the Simons Fellowship.}%
 \and Felix Christian Clemen \footnote {Department of Mathematics, University of Illinois Urbana-Champaign, Urbana, Illinois 61801, USA, Email: \texttt{fclemen2@illinois.edu}}%
 \and Emily Heath \footnote {Department of Mathematics, Iowa State University, Ames, Iowa 50011, USA, Email: \texttt{eheath@iastate.edu} }%
 \and Mikhail Lavrov \footnote {Department of Mathematics, Kennesaw State University, Marietta, Georgia 30067, USA, Email: \texttt{mlavrov@kennesaw.edu}}%
  }
\begin{document}
\maketitle
\begin{abstract}
The Erd\H{o}s--Szekeres Theorem stated in terms of graphs says that any red-blue coloring of the edges of the ordered complete graph $K_{rs+1}$ contains a red copy of the monotone increasing path with $r$ edges or a blue copy of the monotone increasing path with $s$ edges. 
Although $rs + 1$ is the minimum number of vertices needed for this result, not all edges of $K_{rs+1}$ are necessary. We characterize the subgraphs of $K_{rs+1}$ with this coloring property as follows: they are exactly the subgraphs that contain all the edges of a graph we call the circus tent graph $CT(r,s)$.

Additionally, we use similar proof techniques to improve upon some of the bounds on the online ordered size Ramsey number of a path given by P\'erez-Gim\'enez, Pra\l{}at, and West.
\end{abstract}

\section{Introduction}
The Erd\H{o}s--Szekeres Theorem~\cite{ESz} states that any sequence of distinct integers of length at least $rs+1$ must contain a monotone increasing subsequence of length $r+1$ or a monotone decreasing subsequence of length $s+1$.  This fundamental result in extremal combinatorics has inspired the study of many interesting variations (for example, see \cite{BST,FG, FPSS,S}).
In many of these variations, it is useful to observe that the Erd\H{o}s--Szekeres Theorem can be interpreted as a statement about ordered graphs.

An \emph{ordered graph} on $n$ vertices is a simple graph whose vertices have been labeled with $[n]=\{1, 2, \ldots, n\}$.  We denote by $P_n$ the ordered graph which is a path on $n+1$ vertices labeled $1, 2, \dots, n+1$ along the path, and by $K_n$ the ordered complete graph on $n$ vertices.

An ordered graph $G$ on $[N]$  \emph{contains} the ordered graph $H$ on $[n]$ if there is an edge-preserving injection $f:[n]\rightarrow [N]$ such that $f(i)<f(j)$ for all $1\leq i<j\leq n$. 
For ordered graphs $G,H_1,H_2$, we write $G \arrows (H_1,H_2)$ if any red-blue edge-coloring of $G$ contains a red copy of $H_1$ or a blue copy of $H_2$.

Given a sequence $a_1, \dots, a_{rs+1}$ of distinct integers, we can color the edges of $K_{rs+1}$ as follows: color an edge $ij$ with $i<j$ red if $a_i < a_j$, and blue if $a_i > a_j$. Then a monotone increasing sequence of length $r+1$ becomes a red copy of $P_r$; a monotone decreasing sequence of length $s+1$ becomes a blue copy of $P_s$. 

Not all colorings of $K_{rs+1}$ can be obtained in this way, but the Erd\H{o}s--Szekeres theorem can be strengthened to a statement about all colorings; one of its standard proofs shows that a red copy of $P_r$ or a blue copy of $P_s$ must exist in any red-blue coloring. In other words, $K_{rs+1} \arrows (P_r, P_s)$. 
 
However, $K_{rs+1}$ is not the smallest $(rs+1)$-vertex graph $G$ with the property $G\arrows (P_r,P_s)$. For example, any edge $ij$ such that $i + (rs+1-j) < \min\{r,s\}$ is not contained in any ordered path of length $\min\{r,s\}$, and therefore excluding all such edges still leaves a graph $G$ such that $G \arrows (P_r, P_s)$.
But, as we shall see, some other edges of $K_{rs+1}$ are unnecessary for less obvious reasons.

Below we define the minimal subgraph $G$ of $K_{rs+1}$ such that $G\arrows (P_r,P_s)$. Our main result is to prove a surprisingly simple characterization of all $(rs+1)$-vertex graphs $G$ with $G\arrows (P_r,P_s)$: any such ordered graph must contain our minimal example as a subgraph.  

\begin{defi}\label{def:circus-tent}
Let the \emph{circus tent graph} $CT(r,s)$  be the ordered $(rs+1)$-vertex graph with vertices $1,2,\dots, rs+1$ which is the union of the ordered $(rs+1)$-vertex graphs $G_1$ and $G_2$, defined below:
\begin{itemize}
    \item The graph $G_1$ contains an edge $ij$ iff there exists $k\in [s]$ such that $k\leq i<j\leq kr-r+2$ or $rs-kr+r\leq i<j\leq rs+2-k$.
    \item The graph $G_2$ contains an edge $ij$ iff there exists $k\in [r]$ such that $k\leq i<j\leq ks-s+2$ or $rs-ks+s\leq i<j\leq rs+2-k$.
\end{itemize}
\end{defi}

 Figure~\ref{fig:circus-tent} shows the circus tent graph $CT(3,4)$. Note that $CT(3,4)$ does not include, for example, the edge $\{2,7\}$, even though that edge is contained in many paths of length $4$.

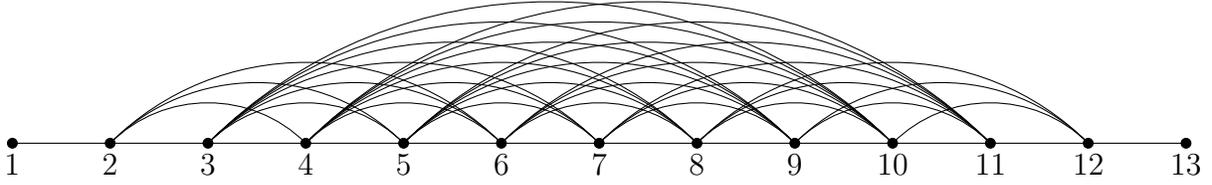
\begin{figure}[h]
\begin{center}
\begin{tikzpicture}[scale=1.3]
	\foreach \i in {1,...,13} 
	{
		\node (\i) at (\i, 0) {};
		\node [below] at (\i) {\i};
		\draw [fill=black] (\i) circle [radius=0.05]; 
	}
	\draw (1.center) -- (13.center);
	\foreach \i in {4,5,6} 
		\draw (2.center) to [out=45,in=135] (\i.center);
	\foreach \i in {5,...,10}
		\draw (3.center) to [out=45,in=135] (\i.center);
	\foreach \i in {6,...,11}
		\draw (4.center) to [out=45,in=135] (\i.center);
	\foreach \i in {7,...,11}
		\draw (5.center) to [out=45,in=135] (\i.center);
	\foreach \i in {8,...,11}
		\draw (6.center) to [out=45,in=135] (\i.center);
	\foreach \i in {9,10,11}
		\draw (7.center) to [out=45,in=135] (\i.center);
	\foreach \i in {10,11,12}
		\draw (8.center) to [out=45,in=135] (\i.center);
	\foreach \i in {11,12}
		\draw (9.center) to [out=45,in=135] (\i.center);
	\draw (10.center) to [out=45,in=135] (12.center);
	\end{tikzpicture}
\caption{The circus tent graph $CT(3,4)$.}
\label{fig:circus-tent}
\end{center}
\end{figure}

In the case $r=s$, the graphs $G_1$ and $G_2$ are identical; taking $k=r=s$ in the definition of $G_1$ gives a clique with $\binom{r^2-2r+3}{2}$ edges, and the other values of $k$ contribute $2\sum_{k=1}^{r-1} (kr-r+2 - k) = r^3-4r^2+7r-4$ edges. In total, 
\[
    |E(CT(r,r))| =\frac12 r^4 - r^3 + \frac12 r^2 + 2r -1.
\]
When $r \ne s$, both $G_1$ and $G_2$ contribute edges, and there is no exact polynomial formula for the number of edges in $CT(r,s)$. Since $CT(r,s)$ contains all edges $ij$ with $s \le i < j \le rs-r+2$, we have
\[
    \binom{rs-r-s+3}{2} \le |E(CT(r,s))| \le \binom{rs+1}{2} .
\]

We show that all edges not in  $CT(r,s)$ can be deleted from $K_{rs+1}$ and  still leave a ``good'' graph with the desired  property, while removing a single edge in $CT(r,s)$ from $K_{rs+1}$ yields a ``bad'' ordered graph without this property. 

\begin{theorem}\label{thm:circustent}
    Let $G$ be an ordered graph on $rs+1$ vertices. Then $G \arrows (P_r,P_s)$ if and only if $CT(r,s)$ is a subgraph of $G$.
\end{theorem}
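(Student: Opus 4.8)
The plan is to establish the two implications separately after a monotonicity reduction. Since $G \arrows (P_r,P_s)$ together with $G \subseteq G'$ implies $G' \arrows (P_r,P_s)$, the ``if'' direction follows once we prove $CT(r,s) \arrows (P_r,P_s)$; and for the ``only if'' direction it is enough to produce, for each edge $e \in E(CT(r,s))$, a red-blue coloring of $K_{rs+1}-e$ with no red $P_r$ and no blue $P_s$, since any ordered graph $G$ on $rs+1$ vertices not containing $CT(r,s)$ is a subgraph of $K_{rs+1}-e$ for some such $e$ and inherits that coloring.

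For $CT(r,s)\arrows(P_r,P_s)$ I would adapt the usual proof of the Erd\H{o}s--Szekeres theorem. Suppose $c$ were a coloring of $CT(r,s)$ with no red $P_r$ and no blue $P_s$, and for each vertex $v$ let $\rho(v)$ and $\beta(v)$ be the number of edges in a longest monotone increasing red, respectively blue, path of $CT(r,s)$ ending at $v$, so that $\rho(v)\le r-1$ and $\beta(v)\le s-1$. Extend $c$ to a coloring $c'$ of $K_{rs+1}$ by coloring each missing edge $\{i,j\}$ with $i<j$ red if $\rho(i)<\rho(j)$ and blue otherwise. Inserting a red edge $\{i,j\}$ with $\rho(i)+1\le\rho(j)$ cannot increase $\rho(w)$ at any vertex $w$ (any new red path reaches $j$, and hence anything past $j$, no more efficiently than before) and does not affect $\beta$; symmetrically for blue edges. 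Hence, processing the missing edges one at a time, $c'$ still has no red $P_r$ and no blue $P_s$, contradicting $K_{rs+1}\arrows(P_r,P_s)$ --- provided we know the key fact that \emph{for every coloring of $CT(r,s)$ with no red $P_r$ and no blue $P_s$, and every non-edge $\{i,j\}$ with $i<j$, one has $\rho(i)<\rho(j)$ or $\beta(i)<\beta(j)$.}

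Proving this key fact is where I expect the real work to lie, and I would do it using the ``expanding wedge'' clique decomposition in Definition~\ref{def:circus-tent}. Because $CT(r,s)$ contains every edge $ab$ with $s\le a<b\le rs-r+2$, a non-edge $\{i,j\}$ must satisfy $i<s$ or $j>rs-r+2$, and more precisely the cliques $[k,kr-r+2]$, $[rs-kr+r,rs+2-k]$ for $k\in[s]$ and their $G_2$-analogues determine exactly how much larger $j$ must be than $i$. I would split into cases according to which wedge family ``should'' have contained $\{i,j\}$ and argue that the cliques of $CT(r,s)$ lying to the left of $j$ already force $\rho(j)$ or $\beta(j)$ to exceed the corresponding value at $i$; otherwise one such clique would, by the Erd\H{o}s--Szekeres theorem applied to a complete graph, itself contain a red $P_r$ or a blue $P_s$. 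The delicate case is the interior one, where neither $i$ is small nor $j$ is large but $\{i,j\}$ still escapes $CT(r,s)$ through one of the four families; an alternative here might be induction on $r+s$, using that deleting a tent pole from $CT(r,s)$ leaves a supergraph of $CT(r-1,s)$ or $CT(r,s-1)$.

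For the ``only if'' direction I would construct the colorings of $K_{rs+1}-e$ explicitly. Using the two symmetries of $CT(r,s)$ --- the reflection $x\mapsto rs+2-x$, which preserves $CT(r,s)$ and interchanges its left and right wedges, and the symmetry exchanging the roles of $r$ and $s$ together with the two colors --- it suffices to treat $e=\{i,j\}$ with $k\le i<j\le kr-r+2$ for some $k\in[s]$. Each such coloring is a refinement of the standard extremal Erd\H{o}s--Szekeres coloring of $K_{rs}$: one partitions $[rs+1]$ recursively into blocks, coloring cross-block edges red at odd recursion levels and blue at even levels, with block sizes chosen from $k,i,j$ so that $e$ plays the role of the single extra edge at the bottom of the recursion; then $v\mapsto(\rho(v),\beta(v))$ becomes a bijection from $[rs+1]\setminus\{i\}$ onto $\{0,\dots,r-1\}\times\{0,\dots,s-1\}$ whose only coincidence is $(\rho(i),\beta(i))=(\rho(j),\beta(j))$, so that every red $P_r$ and every blue $P_s$ must use $e$. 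The simplest instance illustrates the mechanism: for a consecutive edge $e=\{v,v+1\}$ take $r$ blocks of sizes $s,\dots,s,s+1,s,\dots,s$ with the size-$(s+1)$ block containing $\{v,v+1\}$, color all forward cross-block edges red and all within-block edges blue except $\{v,v+1\}$, which is recolored red; one checks directly that the only long monochromatic paths are red copies of $P_r$ through $\{v,v+1\}$. Carrying this out uniformly for every $e\in E(CT(r,s))$ --- and checking that no edge outside $CT(r,s)$ can be realized this way, which is precisely what the inequalities in Definition~\ref{def:circus-tent} encode --- is the other main computational step.
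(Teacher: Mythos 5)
Your overall scaffolding is the same as the paper's: reduce the theorem to (a) $CT(r,s)\arrows(P_r,P_s)$ and (b) a good coloring of $K_{rs+1}-e$ for every $e\in E(CT(r,s))$, using monotonicity and the observation that a graph missing some edge of $CT(r,s)$ inherits the coloring from (b). For (a), however, your route differs from the paper's (which runs an online Builder strategy confined to the edges of $CT(r,s)$, quantified by Lemma~\ref{lemma:increments} and Claims~\ref{claim:early-game}--\ref{claim:late-game}), and this is where the genuine gap lies. Your extension mechanism is fine as far as it goes: coloring a missing edge $ij$ red when $\rho(i)<\rho(j)$ (or blue when $\beta(i)<\beta(j)$) does not increase any $\rho$ or $\beta$ value, because monotone paths through $ij$ split at $j$ into a prefix on vertices $\le j$ and a suffix on vertices $\ge j$. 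But the entire difficulty of the theorem is then concentrated in your ``key fact'' --- that in any hypothetical good coloring of $CT(r,s)$, every non-edge $ij$ satisfies $\rho(i)<\rho(j)$ or $\beta(i)<\beta(j)$ --- and you do not prove it. The statement is exactly as strong as the hard direction of the theorem (it concerns colorings that ultimately do not exist, so any proof of it must extract a contradiction-grade amount of structure), and your plan for it is only a gesture: ``the cliques to the left of $j$ force $\rho(j)$ or $\beta(j)$ to exceed the value at $i$'' is not an argument, since long paths ending at $i$ cannot be rerouted through $j$ precisely because $ij$ is missing; you yourself flag the interior case as delicate; and the fallback claim that deleting a ``tent pole'' from $CT(r,s)$ leaves a supergraph of $CT(r-1,s)$ or $CT(r,s-1)$ is an unverified structural assertion with no vertex correspondence specified. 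Some quantitative bookkeeping in the spirit of the paper's Lemma~\ref{lemma:increments} (or a different completed argument) is needed here; as written, the sufficiency half is a plan, not a proof.

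For (b) your approach is essentially the paper's Lemma~\ref{lemma:necessary}: after the reflection $x\mapsto rs+2-x$ and the $r\leftrightarrow s$, red$\leftrightarrow$blue symmetry, build a product-type coloring in which $v\mapsto(\rho(v),\beta(v))$ hits each cell of $\{0,\dots,r-1\}\times\{0,\dots,s-1\}$ once, except that $i$ and $j$ share a cell. But you only carry this out for a consecutive edge and defer the general case $k\le i<j\le kr-r+2$ as ``the other main computational step,'' which is exactly where the hypothesis $e\in E(CT(r,s))$ must be used: one needs at least $k-1$ vertices to the left of $i$ and at most $r(k-1)$ vertices weakly left of $j$ other than $i,j$ so that the doubled cell can be placed consistently (this is what the paper's $X,Y,Z$ labeling encodes). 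Also, your remark about checking that ``no edge outside $CT(r,s)$ can be realized this way'' is not needed for this direction --- that tightness is precisely part (a), handled separately. So: right idea and likely completable on the necessity side, but a substantive missing argument on the sufficiency side.
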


This is a surprisingly simple characterization of the ``good'' ordered graphs on $rs+1$ vertices. Indeed, this is the first setting in which we have observed the phenomenon that every ``good'' graph must contain a fixed ``good'' subgraph. 

Theorem~\ref{thm:circustent} can be interpreted as a size Ramsey number problem with a fixed number of vertices. The (ordinary) \emph{size Ramsey number} of a graph $G$, denoted $\hat{r}(G)$, is the minimum integer $m$ for which there is a graph $H$ with $m$ edges such that every 2-coloring of $E(H)$ contains a monochromatic copy of $G$.

In 1983, Beck~\cite{B1} proved $\hat{r}(P_n)$ is linear in $n$, settling a question of Erd\H{o}s~\cite{E}. The current best upper bound, given by Dudek and Pra\l{}at~\cite{DP2}, is $\hat{r}(P_n)\leq 74n$, while the current best lower bound, given by Bal and DeBiasio~\cite{BD}, is $\hat{r}(P_n)\geq (3.75-o(1))n$.

Recently, this size Ramsey question has been studied in several different ordered/directed settings, including \cite{orderedbalogh,BKS,BLS,LS}. Let $\tilde r(P_r, P_s)$ denote the minimum number of edges in an ordered graph $G$ such that $G \arrows (P_r,P_s)$. This is the \emph{ordered size Ramsey number} of $P_r$ versus $P_s$.
\begin{theorem}[\cite{orderedbalogh}]
	\label{thm:size-ramsey}
	For some absolute constant $C>0$ and for all $2 \le r \le s$,
	\[
		\frac18 r^2s\le \tilde r(P_r, P_s) \le C r^2 s (\log s)^3.
	\]
\end{theorem}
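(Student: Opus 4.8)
We first restate the property in a convenient form. Fix an ordered graph $G$ on $[n]$ together with a red--blue colouring of $E(G)$ having no red $P_r$ and no blue $P_s$, and for $v\in[n]$ let $\rho(v)$ and $\beta(v)$ be the number of edges of a longest red, respectively blue, monotone increasing path of $G$ ending at $v$. Then $\phi:=(\rho,\beta)$ sends $[n]$ into $\{0,\dots,r-1\}\times\{0,\dots,s-1\}$, and for every edge $uv$ with $u<v$ we have $\phi(v)\not\le\phi(u)$ coordinatewise, since whichever colour $uv$ carries the corresponding coordinate strictly increases along it. Conversely, any $\phi\colon V(G)\to[r]\times[s]$ with $\phi(v)\not\le\phi(u)$ on all edges $uv$, $u<v$, yields such a colouring (colour $uv$ red if $\phi_1(u)<\phi_1(v)$, blue otherwise). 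Thus $G\arrows(P_r,P_s)$ if and only if every $\phi\colon V(G)\to[r]\times[s]$ has a \emph{bad} edge, i.e.\ an edge $uv$ with $u<v$ and $\phi(v)\le\phi(u)$; this is exactly the mechanism behind $K_{rs+1}\arrows(P_r,P_s)$, since on $rs+1$ vertices $\phi$ cannot be injective. Both bounds are approached through this reformulation.

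\textbf{Lower bound.} Let $G$ have $\tilde r(P_r,P_s)$ edges and $G\arrows(P_r,P_s)$; discard isolated vertices. For any partition of $[n]$ into $s$ consecutive blocks, if no block contains a monotone increasing $P_r$ then the map that sends a vertex $v$ in block $j$ to $\bigl(1+(\text{length of a longest increasing path of }G[\text{block }j]\text{ ending at }v),\ j\bigr)\in[r]\times[s]$ has no bad edge (inside a block the first coordinate strictly increases along every edge; between blocks the second does), contradicting $G\arrows(P_r,P_s)$. Hence every partition of $[n]$ into $s$ consecutive blocks leaves a block containing an increasing $P_r$; by the piercing/packing duality for intervals, $G$ contains $s$ pairwise edge-disjoint monotone increasing copies of $P_r$, so $\tilde r(P_r,P_s)\ge rs$. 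Obtaining the stated $\tfrac18 r^2 s$ requires running this argument at two scales simultaneously: partitioning into $s$ blocks, but demanding of each block a condition much stronger than ``no increasing $P_r$'' — informally, that one cannot even destroy a red $P_r$ in that block by a cheap local recolouring — which forces $\Omega(r^2)$ edges per block, while still checking that the block labellings assemble into one labelling within the budget $[r]\times[s]$. Identifying the right intermediate condition and the bookkeeping yielding the constant $\tfrac18$ is, I expect, the crux of the lower bound.

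\textbf{Upper bound.} Note first that the circus tent graph of Theorem~\ref{thm:circustent}, having $\Theta(r^2s^2)$ edges, gives only $\tilde r(P_r,P_s)=O(r^2s^2)$; the content of the upper bound is that using more than $rs+1$ vertices saves a factor of roughly $s$. One natural route is a recursion that doubles the blue parameter: given $H$ with $H\arrows(P_r,P_s)$, build $H^\ast$ with $H^\ast\arrows(P_r,P_{2s})$ by placing several copies $H^1<H^2<\dots<H^\ell$ of $H$ left to right and joining consecutive ones by ``bridge'' graphs, so that in any colouring either some copy contains a red $P_r$, or every copy contains a blue $P_s$ and two of these splice along a bridge into a blue $P_{2s}$. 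The bridges and the number $\ell$ of copies must be tuned so that the adversary can neither colour the bridges to block every splice nor thereby create a long red path across consecutive copies; this balancing is what introduces the polylogarithmic slack. Iterating $\lceil\log_2 s\rceil$ times from $H=K_{r+1}$ (which arrows $(P_r,P_1)$ and has $\Theta(r^2)$ edges), with careful bookkeeping, gives a graph arrowing $(P_r,P_s)$ with $O\bigl(r^2 s(\log s)^3\bigr)$ edges; an alternative is a direct random-type construction analysed by a union bound over candidate labellings $\phi$ grouped by their monotone structure. (A purely uniform random host graph, by contrast, needs about $r s^2\log s$ edges when $r\ll s$, which is why a plain $G(n,p)$ does not suffice.) Making one of these schemes work with only polylogarithmic overhead is the main obstacle here.
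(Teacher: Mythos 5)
This statement is quoted from~\cite{orderedbalogh}; the present paper gives no proof of it, so there is nothing internal to compare against, and the question is simply whether your argument stands on its own. It does not: both halves stop exactly where the real work begins. Your opening reformulation is fine and standard --- $G\arrows(P_r,P_s)$ iff every labelling $\phi\colon V(G)\to[r]\times[s]$ has an edge $uv$, $u<v$, with $\phi(v)\le\phi(u)$ --- but for the lower bound the only statement you actually establish is $\tilde r(P_r,P_s)\ge rs$, which is a factor of order $r$ short of $\frac18 r^2 s$. Even that step is slightly shaky: in the ``piercing/packing duality'' the greedy right endpoints $b_j$ of the blocks containing increasing copies of $P_r$ are not absorbed by the complementary partition (you only know $[b_j+1,b_{j+1}-1]$ is $P_r$-free, not $[b_j,b_{j+1}-1]$), so as written you either lose a constant factor or need singleton blocks; this is fixable, but the quadratic-in-$r$ gain is not. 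Your paragraph about ``running the argument at two scales'' and a block condition ``stronger than no increasing $P_r$'' names no such condition and proves nothing about it; you yourself flag it as ``the crux,'' which is precisely an admission that the lower bound of the theorem is not proved here.

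The upper bound has the same character. Observing that $CT(r,s)$ only gives $O(r^2s^2)$ is correct, and a doubling recursion $(P_r,P_s)\to(P_r,P_{2s})$ via bridged copies is a plausible plan, but you never specify the bridge graphs, the number $\ell$ of copies, or the argument that Painter cannot simultaneously block all splices and avoid a red $P_r$ across consecutive copies --- and that balancing is exactly where the construction can fail or where extra polylogarithmic (or worse) factors enter. No edge count is derived, so the claimed $O\bigl(r^2 s(\log s)^3\bigr)$ is asserted rather than obtained. In short, the proposal is a reasonable research outline, but both inequalities of the theorem are left unproven at their essential step; to make this rigorous you would need either to reconstruct the arguments of~\cite{orderedbalogh} (an averaging/counting argument giving the $r^2s$ lower bound, and a fully specified multi-vertex construction with its colouring analysis for the upper bound) or to supply complete substitutes of your own.
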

 Note that the construction yielding this upper bound requires more than $rs+1$ vertices; it uses $4rs$.

Another interesting variant of the size Ramsey number is the online version introduced by Beck~\cite{B3} and by Kurek and Ruci\'nski~\cite{KR}. In the online setting, we study a game between two players, Builder and Painter. In each turn, Builder presents an edge and Painter colors it red or blue. Builder wins if Painter is forced to create a monochromatic copy of the desired graph. The minimum number of edges necessary for Builder to win is the \emph{online size Ramsey number}.
Simple arguments show that the online size Ramsey number of the path $P_n$ is at least $2n - 3$ and at most $4n-7$ ~\cite{GKP}.

This game can also be played on ordered graphs and with $t\geq 2$ colors. 
We denote by $r_o(P_{n_1},P_{n_2},\ldots,P_{n_t})$ the \emph{online ordered size Ramsey number}, which is the minimum number of edges that Builder must play in order to force Painter to create a monochromatic ordered copy of $P_{n_i}$ in some color $i$. 
In this definition, the game is played on vertex set $\mathbb N$. If we restrict the game to a fixed set of $1+\prod_{i=1}^t n_i$ vertices, then we write $r_o^*(P_{n_1},P_{n_2},\ldots,P_{n_t})$. 
For the diagonal $t$-color case when $n_1=n_i$ for all $i$, we write $r_o(P_n;t)$ and $r_o^*(P_n;t)$.
In~\cite{PPW}, P\'erez-Gim\'enez, Pra\l{}at, and West gave the following bounds on $r_o(P_n;t)$. 

\begin{theorem}[\cite{PPW}]\label{thm:doug}
    For $n \geq 2$, always $\frac{n^{t-1}}{3\sqrt{t}} \leq r_o(P_n;t) \leq tn^{t+1}$.
    \end{theorem}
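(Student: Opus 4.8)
The plan is to prove the two inequalities separately, giving an explicit Builder strategy for the upper bound and an explicit Painter strategy for the lower bound. In both cases the bookkeeping quantity is the same: at any point in the game, for a vertex $v$ and a color $i\in[t]$, let $\ell_i(v)$ be the number of edges in the longest monotone increasing path ending at $v$ all of whose edges currently have color $i$, and let $\ell_i^+(v)$ be the analogous quantity for paths \emph{starting} at $v$. These never decrease, and Builder has won exactly when some $\ell_i(v)$ reaches $n$. Write $\vec\ell(v)=(\ell_1(v),\dots,\ell_t(v))$, which lies in $\{0,1,\dots,n-1\}^t$ until somebody wins.

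For the upper bound, I would have Builder play on a fixed set of $n^t+1$ vertices (so this in fact also bounds $r_o^*(P_n;t)$, which is stronger), following the rule: Builder plays any unplayed edge $uv$ with $u<v$ and $\vec\ell(u)\ge\vec\ell(v)$ coordinatewise. Two observations make this work. First, whatever color $i$ Painter then assigns, $\ell_i(v)$ strictly increases, from at most $\ell_i(u)$ to at least $\ell_i(u)+1$; hence the potential $\Phi:=\sum_v\sum_i\ell_i(v)$ increases by at least $1$ with each edge played. Second, Builder is never stuck before winning: if no $\ell_i(v)$ has reached $n$, then by pigeonhole two of the $n^t+1$ vertices have equal profiles $\vec\ell(u)=\vec\ell(v)$ with $u<v$; had the edge $uv$ been played, its color $i$ would force $\ell_i(v)\ge\ell_i(u)+1>\ell_i(v)$, impossible, so $uv$ is unplayed and is a legal move. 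Since $\Phi\le t(n-1)(n^t+1)$ at the instant before any win, Builder wins after at most $t(n-1)(n^t+1)+1\le tn^{t+1}$ edges, the last inequality being routine for $n\ge2$.

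For the lower bound, I would exhibit a Painter strategy and bound how long it survives. The natural choice is for Painter to color each edge $uv$ ($u<v$) with a color $i$ minimizing $\ell_i(u)+\ell_i^+(v)$; one checks that coloring this way keeps every $\ell_j(w)$ at most $n-1$ (so no monochromatic $P_n$) unless $\ell_i(u)+\ell_i^+(v)\ge n-1$ for \emph{every} color $i$, i.e. unless, across the two endpoints, all $t$ colors have already been developed to length about $n/2$. The point of the argument is then to show that forcing this situation is expensive: by a recursion on the number of colors (each unit of progress in the last color requires Builder to supply a fresh endpoint already developed in the other $t-1$ colors up to the appropriate level, which is itself a $(t-1)$-color task), together with a balancing step across the $t$ colors, one obtains that Builder must play at least $\tfrac{n^{t-1}}{3\sqrt t}$ edges. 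The $\sqrt t$ is the standard-deviation factor from balancing $t$ colors; concretely it matches the size $\Theta(n^{t-1}/\sqrt t)$ of the largest level (equivalently, by de Bruijn--Tengbergen--Kruyswijk, the largest antichain) of the grid poset $\{0,\dots,n-1\}^t$ in which the profiles $\vec\ell(v)$ are forced to live.

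The upper bound is essentially complete as sketched. The real difficulty, and the step I expect to be the main obstacle, is the lower bound: one must set up the recursion and potential carefully enough that (i) the ``cascade'' effect — a single coloring can raise $\ell_i(w)$ at many vertices $w$ at once — is fully accounted for by the $\ell^+$-aware choice above without losing the bound, and (ii) the cost of producing vertices saturated in all colors is correctly aggregated, so that the final count really is $\Omega(n^{t-1}/\sqrt t)$ with the stated constant rather than something weaker. Pinning down the constant $3$ is exactly a local central limit estimate for the middle coefficient of $(1+x+\cdots+x^{n-1})^t$.
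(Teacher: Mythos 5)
The paper does not prove this statement at all: it is quoted from \cite{PPW} (as a special case of their hypergraph result), so there is no in-paper proof to compare against and your argument has to stand on its own. The upper-bound half does: playing on $n^t+1$ vertices, the invariant that once $uv$ is colored $i$ we have $\ell_i(v)\ge \ell_i(u)+1$ at all later times guarantees that two vertices with equal profiles are nonadjacent; while no $\ell_i$ has reached $n$ the profiles lie in $\{0,\dots,n-1\}^t$, so pigeonhole supplies a legal unplayed edge; and the potential $\sum_v\sum_i\ell_i(v)$ grows by at least $1$ per move, giving at most $t(n-1)(n^t+1)+1\le tn^{t+1}$ edges for $n\ge 2$ (the arithmetic checks out since $t(n^t-n+1)\ge 1$). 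This is complete, bounds $r_o^*(P_n;t)$ as well, and is in the same ``active vertices indexed by profiles'' spirit as the Builder strategies in Section 2 of the paper.

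The lower bound, however, is a genuine gap, and you essentially acknowledge this. What you provide is a candidate Painter strategy (choose a color minimizing $\ell_i(u)+\ell_i^+(v)$) together with the assertion that a ``recursion on the number of colors'' plus a ``balancing step'' yields $n^{t-1}/(3\sqrt t)$; neither the recursion nor any potential/weighting is actually set up. The obstacle is precisely the cascade you name: one colored edge can simultaneously raise $\ell_i(w)$ for many $w>v$ and $\ell_i^+(w)$ for many $w<u$, so the accounting ``each unit of progress in the last color requires a fresh $(t-1)$-color-saturated endpoint, hence a fresh batch of edges'' does not follow from what is written, and without a quantity that is provably monotone under such cascades the sketch could just as well collapse to a much weaker bound. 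The constant $3\sqrt t$ is likewise asserted by analogy with the size of the middle layer of the grid $\{0,\dots,n-1\}^t$ rather than derived. As it stands, you have proved $r_o(P_n;t)\le tn^{t+1}$ but not $r_o(P_n;t)\ge n^{t-1}/(3\sqrt t)$; for the latter one would need either the argument of \cite{PPW} or, say, a coloring-family argument in the style of the paper's Lemma~\ref{coloring-set} extended to $t$ colors, carried out quantitatively.
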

    
Note that Theorem~\ref{thm:doug} is a special case of their general result on $k$-uniform hypergraphs. Below, we give a new upper bound on $r_o(P_n;t)$.
\begin{theorem}\label{thm:onlineupperbound}
	$r_o(P_{n_0}, P_{n_1}, \dots, P_{n_t})\le r_o^*(P_{n_0}, P_{n_1}, \dots, P_{n_t}) \le n_0 \prod_{i=1}^t n_i (\lfloor \log_2 n_i\rfloor+1)$.
	
	In particular, $r_o(P_n; t) = O(n^{t} (\log n)^{t-1}).$
\end{theorem}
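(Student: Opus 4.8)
The first inequality, $r_o(P_{n_0},\dots,P_{n_t})\le r_o^*(P_{n_0},\dots,P_{n_t})$, is immediate: playing on a fixed vertex set only restricts Builder, so any Builder strategy that wins the restricted game also wins the unrestricted game (played on $\mathbb N$). Hence the content is the estimate $r_o^*(P_{n_0},\dots,P_{n_t})\le n_0\prod_{i=1}^t n_i(\lfloor\log_2 n_i\rfloor+1)$, and the plan is to prove this by induction on $t$, peeling off one color at a time.

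Precisely, I would establish the recursion
\[
 r_o^*(P_{n_0},\dots,P_{n_t})\ \le\ n_t\bigl(\lfloor\log_2 n_t\rfloor+1\bigr)\cdot r_o^*(P_{n_0},\dots,P_{n_{t-1}})\qquad (t\ge 1),
\]
with base case $r_o^*(P_{n_0})=n_0$, which holds because with a single color Builder just draws the path $1\,2\,3\cdots(n_0{+}1)$ on the $1+n_0$ available vertices. Iterating the displayed inequality down to the base case yields exactly $r_o^*(P_{n_0},\dots,P_{n_t})\le n_0\prod_{i=1}^t n_i(\lfloor\log_2 n_i\rfloor+1)$, and the ``in particular'' statement follows by taking all $n_i$ equal to $n$ and reading off the number of colors. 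Note that the colors $1,\dots,t$ each contribute a logarithmic factor while color $0$ does not, matching the asymmetry in the claimed bound.

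For the inductive step, fix an optimal Builder strategy $S$ for the $t$-color game on $W=\{1,\dots,1+N\}$, where $N=\prod_{i=0}^{t-1}n_i$; thus $S$ plays at most $M:=r_o^*(P_{n_0},\dots,P_{n_{t-1}})$ edges and forces a monochromatic $P_{n_i}$ in one of the colors $0,\dots,t-1$. The idea is to run $S$ on a ``blown-up'' board: replace $N$ of the vertices of $W$ by consecutive blocks $B_1,\dots,B_N$ of size $n_t$ each, and keep the remaining vertex of $W$ as a singleton block, so that in order the blocks form a vertex set of size exactly $Nn_t+1=1+\prod_{i=0}^t n_i$, as required. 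Builder simulates $S$: each time $S$ would query an edge $\{j,j'\}$ of $W$, Builder instead runs a subroutine $\textsc{Connect}(B_j,B_{j'})$ that queries at most $n_t(\lfloor\log_2 n_t\rfloor+1)$ edges with both endpoints in $B_j\cup B_{j'}$ and produces one of two outcomes: either an edge of some color $c<t$ joining the two blocks, whose color Builder then feeds to $S$ as the color of $\{j,j'\}$; or a collection of color-$t$ edges arranged monotonically which, together with color-$t$ paths Builder has been growing inside the blocks, completes a color-$t$ copy of $P_{n_t}$. The binary-search nature of $\textsc{Connect}$ over a size-$n_t$ block is what produces the factor $\lfloor\log_2 n_t\rfloor+1$, while the factor $n_t$ reflects the block size; since $S$ makes at most $M$ queries, the total number of edges is at most $M\cdot n_t(\lfloor\log_2 n_t\rfloor+1)$. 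If $S$ ever announces a win we are done in a color $<t$; if some $\textsc{Connect}$ reaches its second outcome we are done in color $t$.

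The main obstacle is the design and analysis of $\textsc{Connect}$: one must arrange that every failed search for a color-$<t$ edge translates into \emph{quantifiable and chainable} color-$t$ progress, so that the color-$t$ path fragments created in different blocks link into a single monotone path rather than dispersing, and so that $\textsc{Connect}$ never needs to re-query an already-colored edge. Pinning down the exact coefficient $n_t(\lfloor\log_2 n_t\rfloor+1)$, as opposed to something larger by a constant factor, will require the binary search inside $\textsc{Connect}$ to be set up carefully, along with a little extra bookkeeping to handle the floor function and the singleton block at the end of the board; these are the places where the similar techniques used for Theorem~\ref{thm:circustent} — the inductive, Erd\H{o}s--Szekeres-style decomposition into nested monotone pieces — should carry the argument through.
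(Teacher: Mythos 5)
Your first inequality and your base case are fine, but the proof has a genuine gap: the inductive step is never actually carried out. Everything rests on the subroutine \textsc{Connect}, whose design you yourself call ``the main obstacle,'' and there are concrete reasons the black-box blow-up as described cannot supply it. First, when the simulated strategy $S$ wins with a monochromatic path in a color $c<t$, the certifying edges run between blocks, and consecutive edges of the simulated path must share an \emph{actual} vertex inside the common block for the lift to be an ordered path; different invocations of \textsc{Connect} involving the same block may settle on different in-block vertices (and $S$'s earlier queries were answered using whichever vertex an earlier invocation happened to produce), so the simulated monochromatic path need not lift to a real $P_{n_c}$. Second, your accounting needs every call to \textsc{Connect} that fails to return a color-$<t$ edge to yield color-$t$ progress that is chainable across block pairs chosen adversarially by $S$; color-$t$ fragments created in different blocks or between different block pairs have no mechanism forcing them to concatenate into a single increasing path, so Painter can keep answering with color $t$ without ever completing $P_{n_t}$. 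These are exactly the two failure modes the paper's construction is designed to preclude.

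The paper does not induct on the number of colors at all. Builder maintains a $t$-dimensional array of active vertices indexed by $\mathbf{x}\in[0,n_1)\times\cdots\times[0,n_t)$, where $v(\mathbf{x})$ \emph{simultaneously} ends a color-$i$ path of length $x_i$ for every $i\ge 1$; in each round Builder runs a recursive $d$-dimensional search procedure (the recursion is on the dimension of the search inside a round, not on the number of colors of the game) against one new vertex $w$ placed to the right of all active vertices. That placement of $w$ is what makes all monochromatic progress chainable, and a potential argument on the color-$0$ lengths $\ell(\mathbf{x})$ shows that $n_0 n_1\cdots n_t$ rounds, each of cost at most $\prod_{i=1}^{t}(\lfloor\log_2 n_i\rfloor+1)$, force a win. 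To salvage a per-color induction you would have to carry this global bookkeeping (active vertices with prescribed path lengths in \emph{all} colors, all lying left of the next queried vertex) inside the induction hypothesis; an optimal $(t-1)$-color strategy used as a black box simply does not expose that structure, and adding it essentially amounts to reproving the theorem directly.
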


This asymptotic bound is an improvement on Theorem~\ref{thm:doug} when $t = o(\frac{\log n}{\log \log n})$.
In the 2-color case, we also improve the lower bound from Theorem~\ref{thm:doug}, showing that $r_o(P_r,P_r)$ is superlinear in $r$.

\begin{theorem}\label{thm:onlinelowerbound}
	$r_o(P_r, P_r) \ge \log_2 (r+1)! = \Omega(r \log r)$.
\end{theorem}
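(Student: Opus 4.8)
The plan is to exhibit a rich family of Painter strategies---one for each linear order of the finitely many vertices Builder could ever use---and to argue that Builder's strategy tree must have at least $(r+1)!$ leaves in order to defeat all of them. Since a Builder strategy that always wins within $m$ edges has a binary strategy tree in which every root-to-leaf path has length at most $m$, and hence at most $2^m$ leaves, this forces $2^m \ge (r+1)!$, i.e.\ $m \ge \log_2(r+1)!$.

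First I would fix a deterministic Builder strategy witnessing $r_o(P_r,P_r)\le m$. Its strategy tree $T$---branching at each node according to whether Painter colors the presented edge red or blue---is finite (depth at most $m$), so the set $U\subseteq\mathbb{N}$ of all vertices that occur on an edge somewhere in $T$ is finite; write $L=|U|$. For each of the $L!$ linear orders $\prec$ of $U$, define the Painter strategy $\Pi_\prec$ that colors a presented edge $uv$ (with $u<v$ in $\mathbb{N}$) red if $u\prec v$ and blue if $v\prec u$. Running Builder's strategy against $\Pi_\prec$ ends, since Builder wins, at some leaf $\lambda(\prec)$ of $T$ whose coloring contains a monochromatic copy of $P_r$.

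The key step is that each leaf $\lambda$ severely restricts the orders routed to it. The coloring at $\lambda$ depends only on $\lambda$, so we may canonically select from it one monochromatic copy of $P_r$, say on vertices $v_0<v_1<\dots<v_r$ in the order on $\mathbb{N}$ (for instance the lexicographically least red copy, or the lexicographically least blue copy if no red copy exists). If this copy is red then every order $\prec$ with $\lambda(\prec)=\lambda$ satisfies $v_0\prec v_1\prec\dots\prec v_r$, and if it is blue then $v_0\succ v_1\succ\dots\succ v_r$; either way $\prec$ restricted to $\{v_0,\dots,v_r\}$ is forced to be one particular linear order determined by $\lambda$. Hence at most $L!/(r+1)!$ of the $L!$ orders $\prec$ are routed to any single leaf. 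Summing over leaves of $T$ gives $L!\le(\#\text{leaves of }T)\cdot L!/(r+1)!$, so $T$ has at least $(r+1)!$ leaves, whence $2^m\ge(r+1)!$. Finally $\log_2(r+1)!=\Omega(r\log r)$ by Stirling's approximation.

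There is no serious obstacle here; the one point requiring care is that the monochromatic $P_r$ extracted from a leaf---and therefore the forced order on its vertex set---must depend only on the leaf and not on the particular $\prec$, which holds because a leaf of $T$ records the entire colored graph produced in that play, so any fixed canonical choice works. One should also note that the $\prec$-strategies are legitimate (non-adaptive) Painter strategies, so Builder's winning strategy is indeed required to defeat each of them.
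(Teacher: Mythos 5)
Your argument is correct, and it takes a somewhat different route from the paper's. Both proofs rest on the same family of colorings---those induced by linear orders (permutations), where an edge is red precisely when its endpoints appear in increasing order---and on the same key count: a monochromatic copy of $P_r$ pins down the relative order of $r+1$ vertices, hence is compatible with only a $1/(r+1)!$ fraction of the orders. The difference is the mechanism turning this into a bound. The paper proves a general lemma (Lemma~\ref{coloring-set}) in which Painter plays adaptively, always choosing the majority color among the colorings of the family still consistent with the play, so that after $k$ moves at least a $2^{-k}$ fraction survives; a Builder win then forces $2^{-k} \le 1/(r+1)!$. You instead fix Builder's winning strategy, note its decision tree is binary of depth at most $m$ (so has at most $2^m$ leaves) and touches only finitely many vertices, and count how many of the non-adaptive order-induced Painter strategies can be routed to any single leaf, getting at most $L!/(r+1)!$ and hence at least $(r+1)!$ leaves. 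The two arguments are information-theoretically equivalent, but yours is more self-contained for this particular theorem: it needs no adversarial Painter construction, and it bounds $r_o$ directly on the infinite vertex set, avoiding the paper's simulation step (re-embedding the play into a large $K_n$ while reserving blocks of isolated vertices) used to pass from $r_o^*$ to $r_o$. What the paper's formulation buys in exchange is a reusable lemma (any coloring family with ratio $p$ yields a $\log_2(1/p)$ bound) together with an explicit Painter strategy. The one delicate point in your approach---that the extracted monochromatic path, and therefore the forced order on its vertices, must depend only on the leaf and not on the particular order being played---you have identified and handled correctly via a canonical choice.
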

The organization of the paper is the following. In Section~\ref{sec:online}, we prove Theorems~\ref{thm:onlineupperbound} and \ref{thm:onlinelowerbound} about the ordered online size Ramsey number.  In Section~\ref{sec:circustent}, we use similar techniques to prove Theorem~\ref{thm:circustent}, proving the two parts of the statement in different subsections.

\section{Online Ordered Size Ramsey Number of Paths}
\label{sec:online}
\subsection{Two-color case}

Before proving Theorem~\ref{thm:onlineupperbound}, we 
will prove a 2-color version of the result in order to give insight into our proof technique.

\begin{theorem}\label{thm:2coloronline}
	$r_o(P_r, P_s) \le  rs(\lfloor \log_2 r\rfloor+1)$.
\end{theorem}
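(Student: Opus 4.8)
The plan is to describe an explicit Builder strategy on the vertex set $\mathbb{N}$ (in fact on a bounded set of size $rs+1$) that forces a red $P_r$ or a blue $P_s$ while playing at most $rs(\lfloor \log_2 r\rfloor+1)$ edges. The natural idea is that Builder maintains, for each already-colored vertex $v$, two labels: the length $\rho(v)$ of the longest red monotone increasing path ending at $v$ (using only played edges) and the length $\sigma(v)$ of the longest blue monotone increasing path ending at $v$. As in the standard Erd\H{o}s--Szekeres pigeonhole argument, if Builder can always present an edge $uv$ with $u<v$ between two vertices whose label pairs $(\rho(u),\sigma(u))$ and $(\rho(v),\sigma(v))$ agree, then whichever color Painter chooses, one coordinate of the label of $v$ strictly increases; so after enough such moves some $\rho(v)$ reaches $r$ or some $\sigma(v)$ reaches $s$ and Builder wins. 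The point of the $\lfloor \log_2 r\rfloor + 1$ factor is that Builder does \emph{not} build these paths one vertex at a time but doubles: to connect a vertex $u$ with red-label $a$ to a vertex with red-label $2a$ costs roughly $\log_2 a$ extra edges via a binary-doubling gadget, and summed geometrically over the doublings this is $O(\log r)$ edges per unit of progress.

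Concretely, I would organize Builder's play into phases indexed by the ``red depth''. Builder keeps a pool of vertices all sharing the same blue-label and works to increase their common red-label from $a$ to $a+1$ (or doubles it); each time a red edge is forced, a vertex's red-label goes up, and each time a blue edge is forced, that vertex leaves the current pool but its blue-label has increased, so it can never be returned to more than $s$ times total. The key bookkeeping is: the total number of vertices ever involved is at most $rs+1$ (each vertex is ``used up'' once both its labels are maxed, and the labels live in $[r]\times[s]$), and each vertex has at most $\lfloor \log_2 r\rfloor + 1$ edges played at it as the ``upper'' endpoint of a doubling step — this is where the logarithmic factor enters. Multiplying the number of vertices by the per-vertex edge budget gives the claimed bound; one has to be slightly careful that edges are not double-counted, which is why the bound is stated as $rs(\lfloor \log_2 r\rfloor+1)$ rather than twice that.

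I expect the main obstacle to be making the doubling gadget precise and verifying its cost. The cleanest version is a lemma: given a collection of $2^j$ vertices each currently carrying red-label $\ge a$ and a common blue-label $b$, all lying in an interval to the right of everything played so far, Builder can play a binary-tournament-like set of $O(2^j)$ edges among them so that, regardless of Painter's responses, either some vertex attains blue-label $b+1$ (and Builder recurses on the remaining vertices, which still have enough of them) or a single vertex attains red-label $\ge 2a$ — and crucially the edge-count charged to any one vertex over the whole process is $O(\log r)$. Getting the interval/order constraints to nest correctly so that all these edges respect the ordering of $\mathbb N$ (and so that the whole construction fits in $rs+1$ vertices, giving the starred version $r_o^*$) is the fiddly part. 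Once the gadget and its accounting are in hand, the proof is a double induction on $r$ (the doubling parameter) and on the maximum blue-label present, and the final inequality $r_o(P_r,P_s)\le r_o^*(P_r,P_s)\le rs(\lfloor\log_2 r\rfloor+1)$ follows by summing the per-phase costs.
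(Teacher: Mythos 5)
Your opening paragraph sets up the right invariant (track, for each vertex, the lengths of the longest red and longest blue increasing paths ending there, and force one coordinate to grow), but the mechanism you propose for the logarithmic factor --- a binary-doubling gadget that upgrades a red-label $a$ to a red-label $2a$ at cost $O(\log a)$ --- does not work for ordered paths, and this is where the argument breaks. If $u<v$ are right endpoints of red increasing paths of length $a$ and Builder plays $uv$, a red answer yields only a red path of length $a+1$ ending at $v$: the red path ending at $v$ cannot be appended after the one ending at $u$, because monotone paths cannot be reversed and the two paths need not be disjoint or nested in the required way. Consequently, in your tournament gadget on $2^j$ endpoint vertices, if Painter answers red on every edge the survivor's red-label grows only additively, to roughly $a+j$, not to $2a$; to genuinely double you would need about $2^a$ vertices, which destroys both the edge count and the claim that everything fits on $rs+1$ vertices (needed for the starred version you assert). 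A concatenation-style doubling would instead require edges from the \emph{end} of one red path to the \emph{start} of another, with the paths vertex-disjoint and lying in disjoint intervals --- bookkeeping your sketch does not provide, and you yourself flag the gadget as unverified. Since the key lemma is false as stated, the final accounting (at most $rs+1$ vertices times a per-vertex budget of $\lfloor\log_2 r\rfloor+1$ edges) has no support.

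The actual source of the $\lfloor\log_2 r\rfloor+1$ factor is different and simpler: Builder maintains a staircase of active vertices $v_0,\dots,v_k$, where $v_i$ is the last vertex of a red path of length $i$ and of a blue path of length $b_i$, and for each fresh vertex $w$ (placed to the right of all active vertices) performs a binary search over the indices $0,\dots,k$ to find either $v_0w$ blue, or an index $i$ with $v_{i-1}w$ red and $v_iw$ blue, or $v_kw$ red. Each such round costs at most $\lfloor\log_2 r\rfloor+1$ edges and either wins immediately (a red $P_r$ when $k+1=r$) or increases some $b_i$ by one; since the tuple $(b_0,\dots,b_{r-1})$ starts at $(0,-1,\dots,-1)$ and its sum grows by at least one per round, after at most $rs$ rounds some $b_i\ge s$ and a blue $P_s$ appears. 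No doubling of path lengths occurs anywhere --- every round advances exactly one counter by one --- and the logarithm comes purely from locating where $w$ fits in the red staircase. If you want to salvage your write-up, replace the doubling gadget by this binary search over the active-vertex hierarchy; your per-vertex accounting then becomes correct, since each new vertex receives at most $\lfloor\log_2 r\rfloor+1$ probe edges.
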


\begin{proof}
We show that if Builder plays according to the following strategy, then at most $rs(\lfloor \log_2 r\rfloor+1)$ edges are needed to win the game. In order to determine which edges to present in each turn, Builder maintains a list of \emph{active vertices} $v_0, v_1, \dots, v_k$ for some $k < r$, satisfying the invariant that $v_i$ (if it exists) is the last vertex of a red $P_i$. Moreover, each $v_i$ will be the last vertex of a blue path of some length $b_i$.  Note that over the course of the game, $v_i$ and $b_i$ will change. (We set $b_i = -1$ if $v_i$ does not yet exist.)

Initially, Builder sets $k=0$ and lets $v_0$ be the first (leftmost) vertex.
In each round of this strategy, Builder sets $w$ to be the first vertex following all of the active vertices and plays some of the edges $v_0 w, v_1 w, \dots, v_k w$. During the round, either a new active vertex will be defined or one of the defined active vertices will be updated. The round ends when one of the following outcomes occurs. Note that in each case, Builder either increases some $b_i$ or creates a winning red path.
\begin{itemize}
\item Painter colors the edge $v_0 w$ blue. Then Builder updates the active vertex $v_0$, setting $v_0 = w$, and increases $b_0$ by $1$.

\item There is an $i$ such that Painter colors the edge $v_{i-1}w$ red and the edge $v_i w$ blue. Then Builder updates the active vertex $v_i$ by setting $v_i = w$ and increasing $b_i$ by $1$.

\item Painter colors the edge $v_k w$ red, and $k+1 < r$. Then Builder defines a new active vertex by setting $v_{k+1} = w$ and  $b_{k+1}-0$.

\item Painter colors the edge  $v_k w$ red and $k+1 = r$. Then the red path $P_{r-1}$ ending at $v_k$ together with the red edge $v_k w$ forms a red $P_r$, and Builder wins the game.
\end{itemize}
To minimize the number of edges Builder must play in this round, Builder performs  a procedure similar to binary search on the set of edges $\{v_0 w, v_1 w, \dots, v_k w\}$. Builder begins by playing the middle edge $v_{\lfloor k/2 \rfloor} w$. If Painter colors this edge red, then Builder cuts the set of edges in half and continues this process on the second half of the edges, $\{v_{\lfloor k/2 \rfloor + 1}w, \dots, v_k w\}$. If Painter colors $v_{\lfloor k/2 \rfloor} w$ blue, then Builder cuts the set of edges in half and instead continues this process on the first half of the edges, $\{v_0w, \dots, v_{\lfloor k/2 -1 \rfloor} w\}$.

After at most $\lfloor \log_2 (k+1) \rfloor$ turns, the set of edges remaining is a single edge $\{v_i w\}$. Moreover:
\begin{itemize}
\item If $i>0$, then at some point in this round, $v_i w$ was the first edge in the second half of a subset, and $v_{i-1}w$ was colored red.
\item If $i<k$, then at some point, $v_i w$ was the last edge in the first half of a subset, and $v_{i+1}w$ was colored blue.
\end{itemize}
Under these conditions, no matter how Painter colors the edge $v_iw$, one of Builder's goals is satisfied: Builder either increases $b_i$, or increases $b_{i+1}$, or obtains a red $P_r$. Thus, each round can be completed in $\lfloor \log_2 (k+1) \rfloor + 1 \le \lfloor \log_2 r \rfloor + 1$ turns.

We can track the progress of this strategy by considering the ordered $r$-tuple $(b_0, b_1, \dots, b_{r-1})$, which starts at $(0, -1, -1, \dots, -1)$. After $rs$ rounds, we have $\sum_{i=0}^{r-1} b_i=r(s-1)+1$, so $b_i \ge s$ for some $i$, and Builder wins. Therefore, Builder needs at most $rs$ rounds to win according to this strategy, for a total of $rs(\lfloor \log_2 r \rfloor + 1)$ turns.
\end{proof}

\subsection{Proof of Theorem~\ref{thm:onlineupperbound}}

In order to prove our upper bound in the multicolor case, we replace the binary search procedure used by Builder in the proof of Theorem~\ref{thm:2coloronline} with a multidimensional search procedure.

\begin{proof}
Builder's strategy, played on vertices $1, 2, \dots,1+ \prod_{i=0}^t n_i$, is as follows. Throughout the game, Builder maintains a $t$-dimensional array of active vertices, labeled by integer points $\mathbf x = (x_1, \dots, x_t) \in [0,n_1) \times \ldots \times [0, n_t)$. Let $v(\mathbf x)$ be the active vertex labeled by $\mathbf x$. An active vertex may be undefined; however, if $v(\mathbf x)$ is defined, then for each $i=1, \dots, t$, there is an ordered path in color $i$ of length $x_i$ ending at $v(\mathbf x)$. Initially, $v(\mathbf 0) = 1$, and no other active vertices are defined.

Let $\ell(\mathbf x)$ denote the length of the longest ordered path in color $0$ ending at $v(\mathbf x)$. If this vertex is undefined, then we say $\ell(\mathbf x) = -1$.

Builder plays in rounds of length at most $\prod_{i=1}^t \lfloor\log_2 n_i\rfloor+1$. In each round, Builder plays edges from some of the defined active vertices to $w$, the first vertex following all of the active vertices. At the conclusion of a round, Builder either wins the game, or updates some active vertex $v(\mathbf x)$ by setting $v(\mathbf x)=w$, which increases $\ell(\mathbf x)$ by $1$.

Throughout the game, Builder uses the following  \emph{$d$-dimensional search procedure} to choose which edges to play.

Let $S(y_{d+1}, y_{d+2}, \dots, y_t)$ denote the set
\[
    S(y_{d+1}, y_{d+2}, \dots, y_t) = [0, n_1) \times \dots \times [0, n_d) \times \{y_{d+1}\} \times \dots \times \{y_t\}.
\]
Builder applies the $d$-dimensional search procedure to a set $S(\mathbf y)$ for some $\mathbf y \in \mathbb Z^{t-d}$ in order to obtain one of the following outcomes:
\begin{enumerate}
\item A point $\mathbf x \in S(\mathbf y)$ such that the edge $v(\mathbf x)w$ has some color $i > d$, or
\item A point $\mathbf x \in S(\mathbf y)$ such that either the active vertex $v(\mathbf x)$ is undefined or the edge $v(\mathbf x)w$ has color $0$. Moreover, for each $j=1, \dots, d$, either $x_j = 0$ or there is an \emph{assistant point} $\mathbf x^{(j)} \in S(\mathbf y)$ such that $x_j^{(j)} = x_j - 1$ and the edge $v(\mathbf x^{(j)})w$ has color $j$.
\end{enumerate} 
This procedure is defined recursively. In the $0$-dimensional search procedure on $S(\mathbf y)$, Builder draws the edge from $v(\mathbf y)$ to $w$, if the active vertex $v(\mathbf y)$ is defined.  If Painter colors this edge with color $0$, or if $v(\mathbf y)$ is undefined, then Builder obtains outcome 2 by setting $\mathbf x = \mathbf y$. If Painter colors this edge with some color $i \ge 1$, then Builder obtains outcome 1 by setting $\mathbf x = \mathbf y$.

For $d\ge 1$, the $d$-dimensional search procedure on $S(\mathbf y)$ is similar to a binary search. It uses an interval $[a,b)$ initially set to $[0, n_d)$. To cut the interval in half, it performs the $(d-1)$-dimensional search procedure on
\(
    S(\lfloor \tfrac{a+b}{2} \rfloor, \mathbf y) \subset S(\mathbf y).
\)

When this subprocedure is done, there are three possibilities:
\begin{itemize}
\item If the subprocedure yields outcome 1 and the edge $v(\mathbf x)w$ has color $d$, then the procedure continues with interval $[\lfloor \tfrac{a+b}{2} \rfloor + 1, b)$.

\item If the subprocedure yields outcome 1 and the edge $v(\mathbf x)w$ has color $i>d$, then the procedure terminates, having also obtained outcome 1 with the same $\mathbf x$.

\item If the subprocedure yields outcome 2, then the procedure continues with interval $[a, \lfloor \tfrac{a+b}{2} \rfloor)$.
\end{itemize}
After at most $\lfloor \log_2 n_d \rfloor+1$ steps of the $(d-1)$-dimensional search procedure, Builder is left with the empty interval $[a,a)$. 
\begin{itemize}
\item If $a=0$, then the $(d-1)$-dimensional search procedure was performed on $S(0, \mathbf y)$ and yielded outcome 2 with some point $\mathbf x \in S(0, \mathbf y)$. Then the $d$-dimensional search procedure will yield outcome 2 with the same $\mathbf x$ and the same assistant points; since $x_d = 0$, no assistant point $\mathbf x^{(d)}$ is necessary.
\item If $a=n_d$, then the $(d-1)$-dimensional search procedure was performed on $S(n_d-1, \mathbf y)$ and yielded outcome 1 with some point $\mathbf x \in S(n_d-1, \mathbf y)$ such that the edge $v(\mathbf x)w$ has color $d$. Then there is an ordered path of length $n_d$ in color $d$ ending at $w$: the path of length $n_d - 1$ ending at $v(\mathbf x)$ followed by the edge $v(\mathbf x)w$, and Builder wins.
\item If $0 < a < n_d$, then the $(d-1)$-dimensional search procedure was performed on $S(a, \mathbf y)$ and yielded outcome 2 with some point $\mathbf x \in S(a, \mathbf y)$; it was also performed with $S(a-1, \mathbf y)$ and yielded outcome 1 with some point $\mathbf x' \in S(a-1, \mathbf y)$ such that the edge $v(\mathbf x') w$ has color $d$. 

In this case, the $d$-dimensional search procedure can yield outcome 2 with $\mathbf x$, taking the same assistant points and adding the assistant point $\mathbf x^{(d)} = \mathbf x'$, which satisfies the conditions required.
\end{itemize}

By induction, the $d$-dimensional search procedure takes at most $\prod_{i=1}^d (\lfloor \log_2 n_i \rfloor+1)$ moves. 

A round of Builder's strategy consists of performing the $t$-dimensional search procedure on the entire set $[0, n_1) \times \dots \times [0,n_t)$. This search procedure never yields outcome 1, since no color $i>t$ is available. Therefore, Builder obtains outcome 2 with some point $\mathbf x$.

We claim that there is an ordered path of length $x_i$ in color $i$ ending at $w$ for each $i=1, \dots, t$, and therefore $w$ satisfies the prerequisites for replacing $v(\mathbf x)$ as an active vertex. This is automatic if $x_i = 0$. If $x_i > 0$, then there is an assistant point $\mathbf x^{(i)}$ such that $x_i^{(i)} = x_i - 1$ and the edge $v(\mathbf x^{(i)})w$ has color $i$. Then the ordered path of length $x_i - 1$ in color $i$ ending at $v(\mathbf x^{(i)})$ can be followed by the edge $v(\mathbf x^{(i)})w$ to get the path of length $x_i$ Builder wants. 

If $v(\mathbf x)$ is undefined, Builder defines the active vertex  $v(\mathbf x) = w$ and increases $\ell(\mathbf x)$ from $-1$ to $0$. Otherwise, the edge $v(\mathbf x) w$ has color $0$, and there is an ordered path of length $\ell(\mathbf x) +1$ in color $0$ ending at $w$: the path of length $\ell(\mathbf x)$ ending at $v(\mathbf x)$, followed by the edge $v(\mathbf x)w$. Then Builder updates $v(\mathbf x)$ by setting $v(\mathbf x) = w$, which increases $\ell(\mathbf x)$ by $1$.

After $n_0 n_1 \cdot \ldots \cdot  n_t$ rounds, either $\ell(\mathbf 0)$ has been increased $n_0$ times (from $0$ to $n_0$) or one of the other $n_1 \cdot \ldots \cdot  n_t-1$ values $\ell(\mathbf x)$ has been increased $n_0 + 1$ times (from $-1$ to $n_0$). In either case, there is an ordered path of length $n_0$ in color $0$, and Builder wins. This strategy requires at most $n_0 \prod_{i=1}^t n_i (\lfloor \log_2 n_i\rfloor+1)$ moves, as desired.
\end{proof}

\subsection{Proof of Theorem~\ref{thm:onlinelowerbound}}

Our proof of Theorem~\ref{thm:onlinelowerbound} uses  the following lemma, which gives a general strategy for finding lower bounds on $r_o(P_r,P_s)$. 

\begin{lemma}\label{coloring-set}
	Let $\mathcal C = \{C_1, C_2, \dots, C_N\}$ be a set of edge-colorings of $K_{rs+1}$. Let $p > 0$ be such that for every $P_r$ in $K_{rs+1}$, there are at most $pN$ colorings in $\mathcal C$ in which the path is completely red, and for every $P_s$ in $K_{rs+1}$, there are at most $pN$ colorings in $\mathcal C$ in which the path is completely blue. 
	
	Then $r_o^*(P_r, P_s) \ge \log_2 \frac 1p$.
	
	Moreover, if we can find a set of colorings of $K_n$ with the same ratio $p$ for sufficiently large $n$,	then $r_o(P_r, P_s) \ge \log_2 \frac 1p$ as well.
\end{lemma}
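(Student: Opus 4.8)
The plan is to exhibit a strategy for Painter based on maintaining a large family of ``surviving'' colorings. Painter keeps a set $\mathcal C' \subseteq \mathcal C$, initialized to all of $\mathcal C$, with the invariant that every coloring in $\mathcal C'$ agrees with all of Painter's responses so far. When Builder presents an edge $e$, Painter considers how the colorings in $\mathcal C'$ color $e$, responds with whichever color is used on $e$ by at least half of $\mathcal C'$ (breaking ties arbitrarily), and then discards from $\mathcal C'$ every coloring that used the opposite color on $e$. This preserves the invariant, and $|\mathcal C'|$ shrinks by a factor of at most $2$ each move, so after $m$ moves we still have $|\mathcal C'| \ge N/2^m \ge 1$; in particular $\mathcal C'$ is never empty.

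First I would prove the bound on $r_o^*(P_r,P_s)$. Suppose that, against this strategy, Builder wins after $m$ moves; by symmetry assume Painter has been forced to create a red copy of $P_r$. Every edge of this path was colored red by Painter, so this fixed $P_r$ is entirely red in every coloring of $\mathcal C'$; by hypothesis at most $pN$ colorings in $\mathcal C$ make that particular path all red, hence $|\mathcal C'| \le pN$. Combined with $|\mathcal C'| \ge N/2^m$ this gives $2^{-m} \le p$, that is, $m \ge \log_2(1/p)$. Since every winning play takes at least this many moves, $r_o^*(P_r,P_s) \ge \log_2(1/p)$.

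For the ``moreover'' statement, Builder may now use vertices outside a fixed set of $rs+1$, so I would argue by contradiction: suppose Builder has a strategy winning in $m < \log_2(1/p)$ moves on $\mathbb N$. This strategy is a game tree of depth at most $m$ in which each node branches according to Painter's (two possible) replies, so over all branches Builder presents at most $2^{m+1}$ distinct edges, and hence uses at most $2^{m+2}$ vertices in total. Choose $n$ at least this large together with a family $\mathcal C$ of colorings of $K_n$ achieving the same ratio $p$. Painter runs the majority strategy against this $\mathcal C$: she maintains an order-preserving bijection between the vertices Builder actually plays and a subset of $[n]$, and treats each presented edge as an edge of $K_n$. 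Because monochromatic ordered paths depend only on the relative order of the vertices and on the colors, the argument of the previous paragraph applies verbatim and forces $m \ge \log_2(1/p)$, a contradiction. Therefore $r_o(P_r,P_s) \ge \log_2(1/p)$.

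The only subtle point is this last reduction: one has to be certain that allowing Builder to spread a strategy over arbitrarily many or arbitrarily large vertices of $\mathbb N$ gives no extra power, and the resolution is precisely that a strategy winning in few moves can touch only boundedly many vertices, after which an order-isomorphism lets Painter transport the $K_n$-colorings back onto Builder's play. Everything else is the elementary ``halving'' bookkeeping, where I anticipate no difficulty.
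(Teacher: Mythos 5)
Your halving argument for $r_o^*(P_r,P_s)$ is correct and is essentially the paper's proof: maintain the set of surviving colorings, answer by majority, lose at most half each move, and compare $N/2^m$ with $pN$ when a monochromatic path is forced. For the ``moreover'' part you take a genuinely different route from the paper. You fix a (deterministic) Builder strategy that purportedly wins in $m<\log_2(1/p)$ moves, observe that its entire game tree mentions only finitely many vertices of $\mathbb N$ (at most $2^{m+1}$ edges, hence $O(2^m)$ vertices), fix \emph{in advance} an order-preserving injection of this finite set into $[n]$ for $n$ large, and then run the majority strategy on the transported colorings; this is sound, because a lower bound on $r_o$ only requires refuting each fixed Builder strategy, and fixing the strategy makes the embedding an offline object. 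The paper instead builds a single Painter strategy that works against all Builders simultaneously: Painter constructs the order-isomorphism \emph{online}, simulating play on $n\ge 8^k$ vertices and reserving at least $8^{k-i}$ isolated vertices between any two used vertices after move $i$, so that a vertex Builder later inserts between two already-played vertices can always be accommodated. Your approach buys simplicity (no gap-maintenance bookkeeping); the paper's buys uniformity of Painter's strategy. One caution: your closing remark that ``a strategy winning in few moves can touch only boundedly many vertices, after which an order-isomorphism lets Painter transport the colorings'' is only justified because you quantified over a fixed strategy tree first; if the embedding had to be built during play against an unknown Builder, merely having $n$ at least the number of touched vertices would not suffice (Builder can keep inserting vertices between already-embedded ones), and one would need the paper's geometric gap reservation or an $n$ exponential in the number of moves with a midpoint-insertion rule.
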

\begin{proof}
Painter's strategy for playing on $rs+1$ vertices is as follows. After $k$ edges have been played and colored, Painter computes $\mathcal C_k \subseteq \mathcal C$, consisting of all colorings in $\mathcal C$ which agree with the partial coloring of $K_{rs+1}$ built so far.

When Builder plays an edge $vw$, Painter splits $\mathcal C_k$ into two sets: $\mathcal C_k^r$, consisting of all colorings in which $vw$ is red, and $\mathcal C_k^b$, in which $vw$ is blue. Painter colors edge $vw$ red (so that $\mathcal C_{k+1} = \mathcal C_k^r$) if $|\mathcal C_k^r| \ge |\mathcal C_k^b|$, and colors edge $vw$ blue (so that $\mathcal C_{k+1} = \mathcal C_k^b$) otherwise. Thus, at each step, Painter ensures that $|\mathcal C_{k+1}| \ge \frac12 |\mathcal C_k|$; by induction, $|\mathcal C_k| \ge 2^{-k}N$.

If Painter loses the game because a red $P_r$ or a blue $P_s$ has been created, then by definition of $p$, $|\mathcal C_k| \le pN$. Therefore $p \ge 2^{-k}$, or $k \ge \log_2 \frac 1p$.

This argument bounds $r_o^*(P_r, P_s)$; to prove a bound of $r_o(P_r, P_s) \ge k$ using this lemma, Painter simulates Builder's moves on a graph with $n \ge 8^k$ vertices. For every edge Builder plays, Painter plays an edge in the simulation so that the graphs in the actual graph and in Painter's simulation are order-isomorphic except possibly for isolated vertices. Moreover, Painter makes sure that after the $i^{\text{th}}$ move, there are at least $8^{k-i}$ isolated vertices between any two vertices with positive degree in Painter's simulation. This will always be possible and guarantees that Painter can always simulate Builder's future moves for at least $k$ steps. 

Then, Painter determines a color for the edge in the simulated graph, using a collection $\mathcal C$ of colorings of $K_n$. Painter uses that color to play in the actual graph.
\end{proof}

As a corollary, we obtain Theorem~\ref{thm:onlinelowerbound}, which gives an improved lower bound in the diagonal case.
Note that applying Lemma~\ref{coloring-set} with $\mathcal C$ consisting of all possible red-blue edge-colorings of  $K_{r^2+1}$ will give a weaker bound on $r_o(P_r,P_r)$ than we want. Our improvement comes from selecting an appropriate  subfamily of colorings.

\begin{proof}
Choosing $n$ to be as large as necessary, apply Lemma~\ref{coloring-set} with the following set of colorings of $K_n$: for every permutation $\sigma$ of $\{1, 2, \dots, n\}$, take the coloring in which edge $ij$ is red if $\sigma(i) < \sigma(j)$ and blue if $\sigma(i) > \sigma(j)$.

Then $p = \frac{1}{(r+1)!}$, because asking for a specific path $P_r$ to be monochromatic requires $r+1$ values of $\sigma$ to be in a specific relative order. Therefore $r_o(P_r, P_r) \ge \log_2 \frac 1p = \log_2 (r+1)!$.
\end{proof}

\section{The Circus Tent Theorem}
\label{sec:circustent}

In this section, we prove our main result, Theorem~\ref{thm:circustent}.  In Subsection~\ref{sub:necessary}, we show that every subgraph $G$ of $K_{rs+1}$ such that $G\arrows (P_r,P_s)$ must contain $CT(r,s)$. Then, in Subsection~\ref{sub:sufficient}, we show that $CT(r,s)\arrows$ $(P_r,P_s)$. 

\subsection{A Circus Tent is necessary}\label{sub:necessary}

\begin{lemma}\label{lemma:necessary}
For every edge $e\in E(CT(r,s))$, $K_{rs+1}-e$ has a red-blue edge-coloring without any ordered red path of length $r$ or blue path of length $s$. 
\end{lemma}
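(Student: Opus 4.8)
The plan is to exhibit, for each edge $e \in E(CT(r,s))$, an explicit red-blue coloring of $K_{rs+1}-e$ with no red $P_r$ and no blue $P_s$. The natural starting point is the well-known ``grid'' coloring that witnesses tightness of Erd\H{o}s--Szekeres: partition the $rs+1$ vertices into consecutive blocks and color an edge by whether its endpoints lie in the same block or different blocks. Concretely, a coloring based on a permutation built from $r$ decreasing runs of length $s$ (or $s$ decreasing runs of length $r$) has longest red path $P_{r-1}$ and longest blue path $P_{s-1}$; this is the standard extremal example, and it is the prototype for what we want. The point of Lemma~\ref{lemma:necessary} is that $K_{rs+1}$ with \emph{one} extra edge re-added — namely the missing edge $e$ — still admits such a coloring, and the key is to choose the block structure to depend on which edge $e$ we are trying to accommodate.

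The main step is a case analysis according to how $e$ sits inside $CT(r,s)$. Recall $CT(r,s) = G_1 \cup G_2$, where $G_1$'s edges come from intervals of the form $[k, kr-r+2]$ or $[rs-kr+r, rs+2-k]$ for $k \in [s]$, and $G_2$'s from the analogous intervals with $r,s$ swapped. By symmetry (reversing the vertex order swaps the ``left'' family with the ``right'' family, and swapping colors swaps $G_1$ with $G_2$), it suffices to handle edges $e = ij$ lying in a left interval $k \le i < j \le kr-r+2$ of $G_1$. For such an $e$, I would design a coloring in which the vertex set is broken into $r$ groups each of size $s$ (plus one leftover vertex), arranged so that the red paths are confined within groups and blue paths jump between groups, \emph{except} that the two specific vertices $i$ and $j$ are placed so that the edge $ij$ — whatever color the scheme assigns it — cannot be used to lengthen a monochromatic path beyond the threshold. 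The inequality $j \le kr-r+2$, i.e. $j - i \le (k-1)(r-1)$, is exactly the ``budget'' that makes this possible: it bounds how far apart $i$ and $j$ can be, hence how many group-boundaries lie between them, which in turn controls the length of any blue path through the edge $ij$; symmetrically, placing $i$ and $j$ in appropriately chosen groups bounds the red path through $ij$. I would make this precise by writing the coloring as: assign each vertex $v$ a pair $(a(v), b(v))$ with $a(v) \in [r]$, $b(v) \in [s]$, color $vw$ red iff $a(v) = a(w)$ (say), and verify that the longest red path has length $<r$ and the longest blue path has length $<s$ even after adding $e$, by tracking how the coordinates can change along a monochromatic path.

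I expect the main obstacle to be the bookkeeping that confirms re-adding $e$ does not create a forbidden path: one must check that there is no red $P_r$ and simultaneously no blue $P_s$ using the new edge, and this requires the block assignment of $i$ and $j$ to be chosen just delicately enough — too coarse and a blue path gets too long, too fine and a red one does. The role of the precise constants in Definition~\ref{def:circus-tent} ($kr - r + 2$, $rs + 2 - k$, etc.) is presumably to make these two competing constraints simultaneously satisfiable with exactly zero slack, which is why $CT(r,s)$ is minimal; getting the off-by-one arithmetic right in every case of the case analysis (left vs.\ right interval, $G_1$ vs.\ $G_2$, and the boundary values $k=1$ and $k=s$) is where the real work lies. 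A secondary subtlety is that the same edge $e$ may lie in several of the defining intervals, or in both $G_1$ and $G_2$; this is harmless — I only need \emph{one} valid coloring per edge — but one should pick the representation of $e$ that makes the construction cleanest, e.g. the smallest $k$ for which $e$ is an edge of the relevant interval.
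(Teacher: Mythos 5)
Your symmetry reductions (restricting to $G_1$, and by the mirror symmetry to edges with $k \le i < j \le kr-r+2$) match the paper, and so does the general shape of the plan: label each vertex with a pair of coordinates and color each edge according to which coordinate increases. But the proposal stops exactly where the proof has to begin: you never specify the labeling, and the one structural idea that makes the argument work is missing. In the paper's construction the two endpoints $i$ and $j$ receive the \emph{same} label $(0,k-1)$, and the remaining labels are laid out in the order $X^* \oplus Y^* \oplus Z^*$ with $X=\{(0,y): 0\le y\le k-2\}$, $Y=\{(x,y): 1\le x\le r-1,\ 0\le y\le k-2\}$ and $Z$ the remaining labels; the hypotheses $i\ge k$ and $j \le kr-r+2$ are used precisely to ensure that all $X$-labels sit to the left of $i$ and every vertex strictly between $i$ and $j$ gets a label from $Y$. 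This yields the key property that for every pair of vertices other than $\{i,j\}$ the right endpoint has strictly larger first or strictly larger second coordinate, so the rule ``red if the first coordinate increases, otherwise blue if the second does'' is a well-defined coloring of $K_{rs+1}-e$ whose red paths have length at most $r-1$ and blue paths at most $s-1$. Deferring all of this as ``bookkeeping'' and ``off-by-one arithmetic'' means the proof is not there: the explicit labeling and its verification \emph{are} the lemma.

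There is also a conceptual error in the target you set for the construction. You propose to place $i$ and $j$ so that the edge $ij$, \emph{whatever color the scheme assigns it}, cannot lengthen a monochromatic path past the threshold. That is impossible: if a coloring of $K_{rs+1}-e$ avoided a red $P_r$ and a blue $P_s$ and remained safe with $e$ added in either color, it would give a coloring of all of $K_{rs+1}$ with no red $P_r$ or blue $P_s$, contradicting $K_{rs+1}\arrows(P_r,P_s)$, i.e.\ the Erd\H{o}s--Szekeres theorem itself. In fact, in \emph{every} valid coloring of $K_{rs+1}-e$ the deleted edge is doubly critical: coloring it red would complete a red $P_r$ and coloring it blue would complete a blue $P_s$. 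This is exactly what the paper's duplicated label $(0,k-1)$ on $i$ and $j$ encodes, so the goal of the construction needs to be corrected before the case analysis you outline could be carried out.
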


\begin{proof} 
Let $e=ij\in E(CT(r,s))$ with $i<j$. Then either $e\in E(G_1)$ or $e\in E(G_2)$. Without loss of generality, assume $e\in E(G_1)$. (For $e\in E(G_2)$, just switch the roles of $r$ and $s$ throughout the proof.) Then there exists some $k\in [s]$ such that $k\leq i<j\leq kr-r+2$ or $rs-kr+r\leq i<j\leq rs+2-k$. 

Note that by symmetry, it suffices to prove the result for edges of the first type. Indeed, if $K_{rs+1}-ij$ has an edge-coloring with no red $P_r$ or blue $P_s$, then its mirror image is such an edge-coloring for $K_{rs+1}-i'j'$, where $i'=(rs+2)-j$ and $j'=(rs+2)-i$. So, let us assume that $e=ij$ where $k\leq i<j\leq kr-r+2$ for some fixed $k\in [s]$.

First, we will provide a vertex-labeling of $K_{rs+1}-e$ which will be used to construct the desired edge-coloring. Our definitions will require some additional notation. Given a set $S$ of ordered pairs, let $S^*$ be the sequence formed by taking the elements of $S$ in lexicographic order. Let $S^* \oplus T^*$  denote the sequence formed by  concatenating $S^*$ and $T^*$.

Define the following (possibly empty) sets of labels:
\begin{align*}
    X = &\ \{(0,y):0\leq y \leq k-2\}, \\
    Y = &\ \{(x,y):1\leq x\leq r-1, 0\leq y\leq k-2\}, \\
    Z = &\ \{(x,y):0\leq x\leq r-1, k-1\leq y\leq s-1\}\setminus\{(0,k-1)\}.
\end{align*}
Assign the labels from the sequence $X^* \oplus Y^* \oplus Z^*$ to the vertices $[rs+1] \setminus \{i,j\}$ in order; assign the label $(0,k-1)$ to the vertices $i$ and $j$. Note that all vertices between $i$ and $j$ receive a label from $Y$.

Observe that the resulting vertex-labeling has the property that whenever $(a,b)$ comes before $(c,d)$, either $a<c$ or $b<d$ (or both), with one exception: the two copies of $(0,k-1)$, which appear on the vertices $i$ and $j$.
For pairs of labels other than $(0,k-1)$, this holds by construction. For pairs of labels involving $(0,k-1)$, this holds because the labels from $X$ all appear on vertices to the left of $i$, while the labels from $Z$ all appear on vertices to the right of $j$.

Now color $G = K_{rs+1} - e$ as follows. For any edge of $G$, consider the two corresponding labels $(a,b)$ and $(c,d)$ in the sequence; if $a<c$, color the edge red, and if $a \ge c$ but $b < d$, color the edge blue. This provides a red-blue edge-coloring of $G$ without creating a red $P_r$ or blue $P_s$, since the first coordinate can only increase from 0 to $r-1$ and the second can only increase from 0 to $s-1$. 
\end{proof}

Lemma~\ref{lemma:necessary} implies half of the statement of Theorem~\ref{thm:circustent}; if $G$ is an ordered graph on $rs+1$ vertices and $G \arrows (P_r,P_s)$, then $CT(r,s) \subseteq G$.

\subsection{A Circus Tent is sufficient}\label{sub:sufficient}

\begin{lemma}
We have $CT(r,s) \arrows (P_r,P_s)$.
\end{lemma}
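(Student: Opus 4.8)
The plan is to argue the contrapositive in the standard Erdős–Szekeres style, but carefully tracking which edges of $K_{rs+1}$ the classical argument actually uses, and verifying that all of them lie in $CT(r,s)$. Fix a red-blue coloring of $CT(r,s)$ and suppose there is no red $P_r$ and no blue $P_s$. For each vertex $v\in[rs+1]$, define $\rho(v)$ to be the length of the longest red increasing path ending at $v$ (using only edges present in $CT(r,s)$), and $\beta(v)$ to be the length of the longest blue increasing path ending at $v$. Under the no-red-$P_r$, no-blue-$P_s$ assumption we have $\rho(v)\in\{0,1,\dots,r-1\}$ and $\beta(v)\in\{0,1,\dots,s-1\}$, so the map $v\mapsto(\rho(v),\beta(v))$ takes values in a set of size $rs$. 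Since there are $rs+1$ vertices, by pigeonhole two vertices $v<w$ receive the same pair $(\rho(v),\beta(v))=(\rho(w),\beta(w))$. If the edge $vw$ belongs to $CT(r,s)$, then whatever color it has, it would extend either the red path ending at $v$ or the blue path ending at $v$ by one, contradicting $\rho(w)=\rho(v)$ or $\beta(w)=\beta(v)$. So to finish we must show that \emph{every} pair of vertices receiving the same $(\rho,\beta)$-label is joined by an edge of $CT(r,s)$ — equivalently, if $vw\notin E(CT(r,s))$ then $(\rho(v),\beta(v))\neq(\rho(w),\beta(w))$.

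The heart of the argument is therefore a structural claim about the non-edges of $CT(r,s)$: if $v<w$ and $vw\notin E(CT(r,s))$, then the prefix $[v]$ and the suffix $\{w,w+1,\dots,rs+1\}$ are "large" enough that the monovariant forces $\rho(v)+\beta(v)$ and $\rho(w)+\beta(w)$ (or the individual coordinates) to differ. Concretely, I would show two things. First, that $\rho(v)+\beta(v)\ge (\text{something growing with }v)$ — more precisely, restricting attention to the clique-like pieces $G_1$ and $G_2$, among any $t$ consecutive vertices at the left end the pigeonhole inside that clique forces the $(\rho,\beta)$ labels to spread out, so $\rho(v)+\beta(v)$ is bounded below in terms of $v$. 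Second, symmetrically, that $\rho(w)+\beta(w)$ combined with the "distance to the right end" $rs+1-w$ is bounded below. The precise inequalities defining $G_1$ and $G_2$ in Definition~\ref{def:circus-tent} — the ranges $k\le i<j\le kr-r+2$ and $rs-kr+r\le i<j\le rs+2-k$ — are exactly the thresholds at which these two lower bounds, added together, exceed $r+s-2$, which is the maximum possible value of $\rho+\beta$ on any single vertex. When $vw$ is a non-edge, $v$ and $w$ fall on opposite sides of such a threshold, and one shows $\rho(v)+\beta(v) < \rho(w)+\beta(w)$, hence the labels differ; when $vw$ is an edge we are in the pigeonhole case above. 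I would organize this by letting $v$ lie in the "$k$-th block" from the left (so $v$ is governed by the constraint involving $k$) and tracking, via induction along consecutive vertices inside the cliques $G_1,G_2$, how fast $\rho+\beta$ must grow.

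The main obstacle I anticipate is bookkeeping: $CT(r,s)$ is the union of $G_1$ and $G_2$, and a vertex $v$ may be covered by cliques coming from several values of $k$ in \emph{both} $G_1$ and $G_2$ simultaneously, so one has to choose the right clique to run the local pigeonhole in and verify that the resulting lower bound on $\rho(v)+\beta(v)$ matches the algebraic thresholds $kr-r+2$ and $ks-s+2$ exactly — off-by-one errors here are easy. A clean way to handle this is to prove a single lemma of the form: "if $a$ and $b$ are both endpoints of a common clique in $G_1$ (or $G_2$), say the clique on $\{k,\dots,kr-r+2\}$, and $a<b$, then $\rho(b)+\beta(b)\ge \rho(a)+\beta(a)+1$, and moreover among $k-1,\dots$ consecutive vertices at the left the value $\rho+\beta$ already reaches $k-1$" — and then a matching mirror-image lemma at the right end using the ranges $rs-kr+r\le i<j\le rs+2-k$. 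Feeding the worst case of both lemmas into $\rho(v)+\beta(v)\le r-1$ and $\beta$, $\rho \le s-1$ and summing should produce the contradiction precisely when $vw$ is a non-edge. The asymmetry between the roles of $r$ and $s$ in $G_1$ versus $G_2$ means one cannot fully appeal to symmetry and must run the two halves of the clique bookkeeping in parallel, which is the part most likely to require care.
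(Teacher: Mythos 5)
Your setup is fine as far as it goes: defining $\rho(v),\beta(v)$ as the longest red/blue increasing paths in $CT(r,s)$ ending at $v$, applying pigeonhole to get two vertices $v<w$ with equal labels, and observing that the whole proof then reduces to showing that any two vertices carrying the same label must be joined by an edge of $CT(r,s)$. But that reduction is the entire content of the lemma, and you do not prove it; you only sketch how you would. Worse, the one concrete tool you propose for it is false: from an edge $ab$ (with $a<b$) you can only conclude that \emph{one} coordinate increases ($\rho(b)\ge\rho(a)+1$ if $ab$ is red, $\beta(b)\ge\beta(a)+1$ if blue); the other coordinate can drop to $0$, so the claimed monovariant ``$\rho(b)+\beta(b)\ge\rho(a)+\beta(a)+1$ inside a common clique'' simply does not hold (take $\beta(a)$ large and $ab$ red). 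Since your threshold comparison ``$\rho(v)+\beta(v)<\rho(w)+\beta(w)$ for non-edges $vw$'' is built on this sum being monotone along clique edges, the quantitative heart of the argument collapses. The assertion that the ranges $k\le i<j\le kr-r+2$ etc.\ in Definition~\ref{def:circus-tent} are ``exactly the thresholds'' where your two lower bounds exceed $r+s-2$ is never derived, and with the sum not monotone it is unclear what the correct replacement invariant would even be; any fix would have to track the pair $(\rho,\beta)$ (or suitably ordered lists of values) rather than the sum, which is essentially where all the difficulty of the theorem lives.

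For comparison, the paper does not analyze an arbitrary coloring at all. It proves sufficiency by exhibiting an explicit Builder strategy in the online game on $rs+1$ vertices (a variant of the two-color online strategy, augmented by a post-processing step that re-synchronizes the active vertices $v_0,\dots,v_{r-1}$ and their blue-path lengths $b_0,\dots,b_{r-1}$), and then shows via an increment-counting lemma and three positional claims that this strategy wins while only ever requesting edges of $CT(r,s)$; since Painter could have answered Builder's queries using any hypothetical good coloring of $CT(r,s)$, no such coloring exists. The quantities the paper controls ($i$, the blue length $b_i$, and the stage $t$ at which $v_i$ was last updated, with bounds like $t\le t^*+i(s-1-b^*)+(r-1-i)b^*$) play the role your $\rho+\beta$ bookkeeping was meant to play, but they are tied to the dynamics of the strategy rather than to a static potential function, which is what makes the off-by-one threshold matching go through.
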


In order to prove this result, we give a strategy for Builder in the online game on $rs+1$ vertices which is a slight modification of the strategy used to prove Theorem~\ref{thm:2coloronline}. Then, we argue that Builder will win by applying this strategy without ever playing an edge outside the circus tent graph $CT(r,s)$. This implies that Painter cannot have an offline strategy for coloring $CT(r,s)$ without a red $P_r$ or blue $P_s$, or else Painter could have used that strategy for the online game as well.

As in the earlier strategy, Builder maintains a list of vertices $v_0, v_1, \dots, v_{r-1}$ and a corresponding tuple $(b_0, b_1, \dots, b_{r-1})$ throughout the game with the property that $v_i$ is the rightmost vertex of a red path of length $i$ and a blue path of length $b_i$. Builder's strategy will proceed in $rs+1$ stages labeled $1, \dots, rs+1$; we will write $v_i(t)$ and $b_i(t)$ for the values of $v_i$ and $b_i$ respectively after stage $t$ is completed. Some of these vertices $v_i(t)$ may be undefined (in which case we set $b_i(t) = -1$), and some of these vertices may be the same. At the beginning of the strategy, which we represent by $t=0$, $v_i(0)$ will be undefined for all $i$.

In stage $t$ of the strategy, Builder asks Painter to color the edges $v_i(t-1)t$ for every defined vertex $v_i(t-1)$. 
Recall that the strategy used to prove Theorem~\ref{thm:2coloronline} requires Builder to use a binary search procedure to minimize the number of edges played in the game. 
Since we are not interested in minimizing the number edges played in the current proof, our new strategy allows Builder to draw all of the edges $v_i(t-1)t$ in stage $t$.
 
As before, the vertices $v_0, v_1, \dots, v_{r-1}$ and the tuple $(b_0, b_1, \dots, b_{r-1})$ are updated at the end of stage $t$, according to the colors Painter assigns to the edges $v_i(t-1)t$.  

\begin{itemize}
\item If $i<r$ is the least nonnegative integer such that either the edge $v_i(t-1) t$ is blue or the vertex $v_i(t-1)$ is undefined, then
Builder updates the vertex $v_i(t)$ by setting $v_i(t) = t$ and $b_i(t) = b_i(t-1) + 1$.

\item If the vertex $v_i(t-1)$ is defined and the edge $v_i(t-1)t$ is red for all $i<r$, then there is a red path of length $r$ ending at $t$: the path of length $r-1$ ending at the vertex $v_{r-1}(t-1)$, followed by the red edge $v_{r-1}(t-1)t$. In this case, Builder wins.

\item Our new addition to Builder's strategy is the following post-processing step: After updating the vertex $v_i(t)$ to $t$, Builder also updates any vertex $v_j(t)$ with $j<i$ and $b_j(t-1) \le b_i(t)$, setting $v_j(t) = t$ and  $b_j(t) = b_i(t)$. 
 
These updates preserve Builder's invariant because $t$ is the rightmost vertex both of a red path of length $j$ (it is actually the rightmost vertex of a red path of length $i$, and $i>j$) and a blue path of length $b_i(t)$, so we can set $b_j(t) = b_i(t)$.

In all other cases, we keep the vertex $v_j(t) = v_j(t-1)$ and $b_j(t) = b_j(t-1)$.
\end{itemize}

The proof that Builder's strategy always works is the same as before, so we now show that Builder's strategy never uses edges outside $CT(r,s)$ in three steps. We will assume without loss of generality that $r \le s$. 

In the first step, we consider edges starting at vertices $1, 2, \dots, r$. For each positive $k \le r$, vertex $k$ has an edge to vertices $k+1, k+2, \dots, ks-s+2$ in $G_2$ and therefore in $CT(r,s)$. The following claim shows that no other edges starting at vertices $1, 2, \dots, r$ are used in Builder's strategy:

\begin{claim}\label{claim:early-game}
	For positive $k \le r$, if $v_i(t) \le k$ for any $i$ and $t$, then $t \le ks-s+1$.
\end{claim}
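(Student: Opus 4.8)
The plan is to prove the claim by induction on $k$. Two features of Builder's strategy do most of the work. First, once $v_i(t)$ is defined it is non-decreasing in $t$, and whenever it changes it is reset to the current stage number; consequently, if $v_i(t)=k$ then $v_i$ has equalled $k$ at every stage from $k$ to $t$, and throughout that range $b_i$ has been frozen at the value $\beta:=b_i(k)$. Since the red path of length $i$ and the blue path of length $\beta$ witnessed by $v_i(t)=k$ both lie inside $\{1,\dots,k\}$, this gives the crude bounds $i\le k-1$ and $\beta\le k-1$; also $\beta\le s-1$, since Builder has not yet produced a blue $P_s$. Second, the post-processing step keeps the tuple sorted, $b_0(t)\ge b_1(t)\ge\cdots\ge b_{r-1}(t)$, and moreover forces indices sharing a common $b$-value to share a common $v$-value. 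A pleasant consequence I would establish and use is that $\sum_{i=0}^{r-1}(b_i(t)+1)=t$ for every $t$, and that $\{\,j:v_j(t)=k\,\}$ is always an order-interval which, passing from one stage to the next, either stays the same, loses its least element, or becomes empty.

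For the base case $k=1$: after stage $1$ we have $v_0(1)=1$ with all other pointers undefined, and checking the two possibilities for the colour of $\{1,2\}$ in stage $2$ shows $v_0(2)=2$; no pointer $v_i$ with $i\ge 1$ can ever equal $1$. Hence $v_i(t)\le 1$ forces $t=1=1\cdot s-s+1$. For the inductive step, assume the claim for $k-1$. If $v_i(t)\le k-1$, the induction hypothesis already gives $t\le (k-1)s-s+1\le ks-s+1$, so we may assume $v_i(t)=k$, with $i$ chosen as large as possible. By the induction hypothesis applied to $k-1$, after stage $(k-1)s-s+1=ks-2s+1$ every active vertex is at least $k$; so if we had $t\ge ks-s+2$, then during each of the $s+1$ stages $ks-2s+2,\dots,ks-s+2$ the smallest active vertex would be exactly $k$, with $v_i$ frozen at $k$ and $b_i=\beta\le k-1$.

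It remains to show this cannot happen, i.e.\ that vertex $k$ cannot be the smallest active vertex for $s+1$ consecutive stages. The idea is to classify those stages: the stages that strip the least element off the interval $J:=\{j:v_j=k\}$ (there are at most $k$ of these, since that is an upper bound on $|J|$ when vertex $k$ is created), and the ``idle'' stages, which further split according to whether the action index lies below $J$ or above $J$. Idle stages whose action index lies above $J$ are easy to bound, because the $b$-values they increment are all at most $\beta-1\le k-2$, leaving little room. The delicate case --- and the main obstacle --- is the idle stages whose action index lies below $J$: here one must control how large $b_j$ can grow for the pointers $v_j$ with $j<\min J$ (which are defined and at least $k$ throughout the window), and I expect this needs the sorted structure together with a second appeal to the induction hypothesis, converted via the conservation law $\sum_i(b_i(t)+1)=t$ into a bound on $t$. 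Making this accounting tight enough that the total number of such stages is at most $s$, rather than a larger constant, is where the real work lies.
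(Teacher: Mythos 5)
There is a genuine gap, and it is not just that you leave the decisive accounting unfinished (``where the real work lies''): the statement you reduce to is false. Vertex $k$ \emph{can} be the smallest active vertex for far more than $s+1$ consecutive stages. For example, let Painter color the first $k$ edges blue, so that $v_0$ is set to $k$ at stage $k$ with $b_0=k-1$, and thereafter keep $v_0$'s edge red while advancing the indices $1,\dots,r-1$ one step at a time, never letting any $b_j$ with $j\ge 1$ reach $k-1$ (which would trigger the post-processing step and re-point $v_0$). Then $v_0$ stays equal to $k$, and $k$ is the minimum defined active vertex, for $(r-1)(k-1)$ consecutive stages; for $r=s$ and $k\ge 3$ this exceeds $s+1$. (It does not contradict the claim, because all of these stages occur by stage $k+(r-1)(k-1)\le (k-1)s+1$ --- in fact this scenario is tight.) Consequently no local count of a trailing window of $s+1$ stages, however you classify them, can possibly work; the argument must be anchored to the stage $t^*\le k$ at which $v_i$ was last updated and must bound the \emph{entire} freeze interval $(t^*,t]$.

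The second missing ingredient is the joint bound $i+\beta\le k-1$. Your ``crude bounds'' $i\le k-1$ and $\beta\le k-1$ are obtained separately, and they cannot be added by counting vertices, since the red path of length $i$ and the blue path of length $\beta$ ending at vertex $k$ may share interior vertices. The paper instead proves, by induction on the stages of Builder's strategy, the invariant $i+b_i(t)\le t-1$ for every defined $v_i(t)$ (checking that the post-processing step preserves it), which at the freeze start gives $i+\beta\le t^*-1\le k-1$. Combined with Lemma~\ref{lemma:increments}, which bounds the freeze length by $i(s-1-\beta)+(r-1-i)\beta$, and the assumption $r\le s$, this yields $t\le t^*+(i+\beta)(s-1)-2i\beta\le k+(k-1)(s-1)=(k-1)s+1$ directly, with no induction on $k$ needed. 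Your sorted-tuple observations and the conservation law $\sum_j\bigl(b_j(t)+1\bigr)=t$ are correct (indeed it holds with equality, since post-processing only re-points vertices at ties and never increases a $b_j$), but they do not substitute for the invariant $i+b_i(t)\le t-1$ together with a bound on the whole freeze, which is where the actual content of the claim lies.
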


We postpone the proof of this claim, and the subsequent technical claims, to the next section.

In the second step, we consider edges starting at vertices $r+1, r+2, \dots, s$. Setting $k=s$ in Definition~\ref{def:circus-tent} shows that $G_1$ contains all of the edges $ij$ for  $r \le i < j \le (r-1)s+2$. The following claim shows that this set contains all edges used in Builder's strategy which start at vertices $r+1, r+2, \dots, s$:

\begin{claim}\label{claim:middle-game}
	For $r+1 \le k \le s$, if $v_i(t) \le k$ for any $i$ and $t$, then $t \le (r-1)s+1$.
\end{claim}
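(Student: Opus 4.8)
The plan is to analyze what Builder's strategy does during the "early game" — specifically, to understand the state of the active vertices $v_0, v_1, \dots, v_{r-1}$ at the moment Builder first plays an edge to a vertex numbered larger than $s$. The key intuition is this: the only way an active vertex $v_i(t)$ can equal a value $k \le s$ is if $v_i$ was last updated at stage $k$. So I want to bound, for each $k$ with $r+1 \le k \le s$, how large $t$ can be while some $v_i(t)$ still holds the value $k$. Since $v_i(t) = k$ forces that $v_i$ has not been touched (neither the main update nor the post-processing step) at any stage in $\{k+1, \dots, t\}$, I need to show that $v_i$ must be overwritten by stage $(r-1)s+2$ at the latest.

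The main tool will be a monovariant / pigeonhole argument on the tuple $(b_0, \dots, b_{r-1})$, analogous to the counting in the proof of Theorem~\ref{thm:2coloronline}, but now tracking \emph{when} each coordinate was last incremented. First I would establish the monotonicity facts that follow directly from the post-processing step: after any stage $t$, we have $b_0(t) \ge b_1(t) \ge \dots \ge b_{r-1}(t)$ (the post-processing precisely enforces that a smaller index never has a strictly smaller $b$-value than a larger index that was just updated), and moreover, if $v_i(t) = v_j(t)$ for $i < j$ then $b_i(t) = b_j(t)$. Next, I would argue that at each stage $t$, \emph{exactly one} "block" of consecutive indices $j \le i$ gets its vertex set to $t$, and among those, $b_i$ increases by exactly $1$ while the others (the $v_j$ with $j<i$) are merely raised to match $b_i(t)$. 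The crucial quantitative step: if $v_i(t) = k$ for some $k$ in the range, then coordinate $i$ was incremented at stage $k$ and has value $b_i(k)$, and no coordinate with index $\ge i$ has been updated since — so in particular $b_{r-1}$ has not changed since some stage $\le k$. Combined with the bound $b_{r-1}(t) \le s$ (else Builder already has a blue $P_s$ and has won) and the fact that each stage after $k$ must update \emph{some} block, I would count the total number of increments distributed among coordinates $0, \dots, i-1$ over stages $k+1, \dots, t$ and show this forces $t \le (r-1)s+1$.

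The hard part will be making the counting in the last step precise: unlike in Theorem~\ref{thm:2coloronline}, where each round increments $\sum b_i$ by exactly $1$, the post-processing step can increment several coordinates at once, and it resets some $b_j$ values \emph{downward} relative to what a naive sum would predict (it raises them, but to a capped value). So the right potential function is not $\sum b_i$ but something like $\sum_{j < i} (\text{something})$ restricted to the low-index coordinates, or a lexicographic argument on the prefix $(b_0, \dots, b_{i-1})$; I expect to need the invariant that once $v_i$ holds value $k$, every subsequent stage's update block lies entirely within indices $< i$ (since indices $\ge i$ would overwrite $v_i$), and such a block strictly advances the tuple $(b_0, \dots, b_{i-1})$ in a way that can happen at most $(r-1)s + 1 - k$ times before $b_{i-1}$ — hence $b_{r-1}$, by the earlier monotonicity — would be forced to exceed $s$. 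I would also use Claim~\ref{claim:early-game} (already available) to handle the boundary behavior for $k$ near $r$, ensuring the block containing small indices behaves as expected. Putting these pieces together yields $t \le (r-1)s+1$, which is exactly the claimed bound.
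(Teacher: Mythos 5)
Your structural observations are fine as far as they go: the post-processing step does maintain $b_0(t) \ge b_1(t) \ge \dots \ge b_{r-1}(t)$, and each stage updates a contiguous block of indices ending at the main updated index. But the invariant on which your whole counting rests is false: it is \emph{not} true that once $v_i$ holds the value $k$, every later update occurs at an index below $i$. A main update at an index $j>i$ leaves $v_i$ untouched unless the post-processing condition $b_j(t)\ge b_i$ happens to hold, and since $b_j\le b_i$ by monotonicity, coordinates above $i$ can be incremented repeatedly (up to roughly $b_i$ times each) while $v_i$ persists. Concretely, take $r=2$, $s=4$ and let Painter color the edges $\{1,2\},\{2,3\},\{3,4\}$ blue, so $v_0=4$, $b_0=3$; in stage $5$ the edge $\{4,5\}$ is forced red and the update occurs at index $1$ ($v_1=5$, $b_1=0$), and in stage $6$ the update again occurs at index $1$ ($b_1=1$), while $v_0$ remains equal to $4$ throughout because $b_1<b_0$ keeps post-processing from firing. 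So your deduction ``in particular $b_{r-1}$ has not changed since some stage $\le k$'' fails, and the counting scheme built on it collapses. (The monotonicity inference near the end is also backwards: $b_{r-1}\le b_{i-1}$, so forcing $b_{i-1}$ to be large says nothing about $b_{r-1}$; what you actually need is only that any single $b_j$ reaching $s$ ends the game.)

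The paper's route is different and much shorter: it applies Lemma~\ref{lemma:increments}, whose content is exactly the two-sided count your plan is missing --- while $v_i$ persists from stage $t^*$ with $b_i=b^*$, each $b_j$ with $j<i$ can increase at most $s-1-b^*$ times, each $b_j$ with $j>i$ at most $b^*$ times, and every stage increments some $b_j$; the claim is then a one-line optimization of the resulting bound over $i\in\{0,\dots,r-1\}$. By contrast, your final step asserts without justification that the relevant advances ``can happen at most $(r-1)s+1-k$ times''; nothing in your setup produces this number, and even granting your (false) invariant, the natural count --- each of the at most $r-1$ coordinates below $i$ increasing at most $s-1$ times --- only gives $t\le k+(r-1)(s-1)$, which exceeds $(r-1)s+1$ whenever $k\ge r+1$. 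Be warned that the bound here is genuinely delicate at the endpoint $v_i(t)=s$ (in the run above, $v_0=4=s$ still holds at $t=6>(r-1)s+1=5$): an edge leaving vertex $s$ can end beyond $(r-1)s+2$ and is then covered by the clique on $\{s,\dots,rs-r+2\}$ of $CT(r,s)$ rather than by the clique invoked in the second step, so any complete argument has to account for this case rather than aim for the single bound $(r-1)s+1$ by a cruder count.
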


In the third step, we consider edges starting at all other vertices. In Definition~\ref{def:circus-tent}, taking $k=r$, we see that $G_2$ has edge $ij$ for $s \le i < j \le rs-r+2$. This shows that if Builder draws an edge $ij$ with $i > s$, then that edge certainly exists in $CT(r,s)$ unless $j \ge rs-r+3$. 

To handle edges that do end at a vertex $j \ge rs-r+3$, we use the other cliques in $G_2$; taking $k=1, \dots, r-1$ in Definition~\ref{def:circus-tent}, we see that $G_2$ has edges from each $i \ge rs-ks+s$ to $j = rs-k+2$. No other edges ending at $j$ will be used if, after stage $t = rs-k+1$, we have $v_i(t) \ge rs-ks+s$. This is guaranteed by the following claim:

\begin{claim}\label{claim:late-game}
	For positive $k \le r-1$, if Builder has not won by stage $t_k = rs-k+1$, then $v_i(t_k) \ge rs-ks+s$ for all $i$.
\end{claim}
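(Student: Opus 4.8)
The plan is to track the tuple $(b_0(t), b_1(t), \dots, b_{r-1}(t))$ and the positions $v_i(t)$ together, showing that if Builder has not yet won, then not too many stages can have passed without the active vertices being pushed far to the right. The key observation is that whenever a stage $t$ updates some $v_i$ (and possibly $v_j$ for $j<i$ via the post-processing step), the value $\sum_i (b_i(t)+1)$ increases by exactly $1$: one of the $b_i$ is incremented by $1$, and the post-processing step only \emph{equalizes} smaller $b_j$'s up to the new value of $b_i$, which (by the invariant, since each $b_j(t-1)\le b_i(t-1)\le b_i(t)$ for $j\le i$ before this stage... ) — more carefully, the post-processing replaces each affected $b_j(t-1)$ by $b_i(t)$, so the sum can jump; I will instead track the simpler potential $\Phi(t) = \sum_{i=0}^{r-1}\big(b_i(t)+1\big)$, noting that $\Phi(0)=0$, that Builder wins as soon as some $b_i \ge s$, and that each stage increases $\Phi$ by \emph{at least} $1$, so $\Phi(t)\ge t$ is false in general — the right bound is $\Phi(t) \le$ something. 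Let me restate the actual mechanism: the argument proving Builder wins (inherited from Theorem~\ref{thm:2coloronline}) already shows that after $rs$ stages some $b_i\ge s$. So if Builder has not won by stage $t_k = rs-k+1$, then every $b_i(t_k) \le s-1$, which is the hypothesis I get to use for free.

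From $b_i(t_k)\le s-1$ for all $i$, the plan is to lower-bound $v_i(t_k)$. The mechanism is: $v_i$ can only be updated to the current stage number $t$, and $b_i$ is incremented by $1$ each time $v_i$ is updated \emph{in the first bullet} (as the least blue/undefined index), while the post-processing bullet sets $v_j = t$ but sets $b_j$ to the (strictly larger, since $j<i$) value $b_i(t)$. So I will argue by strong induction on $k$ (or directly): consider the last stage $t^\ast \le t_k$ at which $v_i$ was updated. At that stage, either $v_i$ became the least-blue index and $b_i$ was incremented, or $v_i$ was caught by post-processing from some $v_{i'}$ with $i'>i$. Chasing this chain, $v_i(t_k) = t^\ast$ where $t^\ast$ is the most recent stage updating any index $\ge i$; and the number of stages up to $t^\ast$ that incremented \emph{some} $b_m$ with $m\ge i$ is bounded, because each such $b_m$ never exceeds $s-1$ and there are $r-i$ such indices. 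Combining with a matching bound on stages incrementing indices $<i$ (each such stage also happens at most $s$ times per index, giving $\le is$ stages that do \emph{not} touch $v_i$), I get $t_k \le$ (stages not reaching $v_i$) $+ \ 1 \le is + (\text{const})$, hence... this is going the wrong way; I actually want $v_i(t_k)$ \emph{large}.

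Here is the cleaner route, which I expect to be the real argument. Run the strategy and look at stage $t$ for $t > t_k = rs-k+1$ hypothetically; the point is that between stage $t_k$ and stage $rs$ there are exactly $k-1$ stages, and Builder's win by stage $rs$ forces a lot of incrementing to be crammed into those stages, which in turn forces the $v_i$'s to already be far right by stage $t_k$. Concretely: by stage $rs$ some $b_i\ge s$; since $b_i(t_k)\le s-1$ and each stage increments exactly one coordinate's ``count'' — I will prove the invariant $\sum_i (b_i(t)+1) \ge t - (\text{number of post-processing-only stages})$... I realize the honest potential is $\Psi(t) := \sum_{i=0}^{r-1}(b_i(t)+1)$ which increases by at least $1$ every stage, so $\Psi(t)\ge t$; combined with $b_i\le s-1$ this gives $r s \ge \Psi(t) \ge t$, recovering only $t\le rs$. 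To get the \emph{position} bound I will use: $v_i(t)\ge t - \#\{\text{stages }\le t\text{ not updating any }v_m, m\ge i\}$, and bound the latter count via $\sum_{m<i}(b_m(t)+1)\le$ (number of stages that incremented a count $<i$) $+ \ (\text{post-processing slack})$. The main obstacle — and the step I would spend the most care on — is precisely accounting for the post-processing step: it updates $v_j=t$ \emph{without} incrementing the ``first-bullet count,'' so I must show these free updates only help (push $v_j$ rightward) and never cost anything in the counting, i.e. prove the clean inequality $v_i(t) \ge t - \sum_{m=0}^{i-1}\big(b_m(t)+1\big)$ by induction on $t$, and then plug in $b_m(t_k)\le s-1$ to get $v_i(t_k) \ge t_k - is = rs - k + 1 - is \ge rs - ks + s$ (using $k\le r-1$ so that $k+is \le k + (r-1)s \le$ ... checking: we need $rs-k+1-is \ge rs-ks+s$, i.e. $ks - s - is \ge k - 1$, i.e. $(k-i-1)s \ge k-1$, which holds for $i\le k-1$ and is exactly the boundary case $i=k-1$ needing $0\ge k-1$ — so the bound as I've set it up is tight only when... ). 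The genuinely delicate point is getting the off-by-one in that position inequality exactly right so it matches the clique ranges $rs-ks+s \le i < j \le rs-k+2$ in Definition~\ref{def:circus-tent}; I would verify it against the small case $CT(3,4)$ in Figure~\ref{fig:circus-tent} before trusting it.
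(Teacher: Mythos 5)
Your proposal has two genuine gaps, and you flagged the second one yourself without resolving it. First, the key inequality you plan to prove by induction, $v_i(t) \ge t - \sum_{m=0}^{i-1}\bigl(b_m(t)+1\bigr)$, is false. A stage whose ``least blue/undefined'' index is some $j>i$ need not update $v_i$ at all: the post-processing step only resets $v_i$ when $b_i(t-1) \le b_j(t)$, which can fail (the $b$'s are monotone decreasing in the index, and $b_i$ may exceed $b_j$ by more than one). Already for $i=0$ your inequality would force $v_0(t)=t$ at every stage, but if Painter colors $v_0w$ blue a few times and then plays $v_0w$ red so that a new vertex $v_1$ is created with $b_1=0<b_0-1$, then $v_0$ is left behind. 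Second, even if the inequality held, plugging in $b_m(t_k)\le s-1$ gives only $v_i(t_k)\ge t_k - is$, and your own computation shows $t_k - is \ge rs-ks+s$ reduces to $(k-i-1)s\ge k-1$, which fails for every $i\ge k-1$ (except $k=1$, $i=0$). The claim, however, must hold for \emph{all} $i\le r-1$, so the argument as set up cannot close.

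The missing idea is the one you gestured at in passing (``a lot of incrementing crammed into the last $k-1$ stages'') but then abandoned: you must use a \emph{lower} bound on the $b_j$'s at time $t_k$, not just the trivial upper bound $b_j\le s-1$. Since every stage increases $\sum_j b_j$ by at least $1$, we get $\sum_j b_j(t_k)\ge t_k-r=r(s-1)-k+1$, and with the monotonicity $b_0(t_k)\ge\cdots\ge b_{r-1}(t_k)$ this yields $(r-j)\bigl(s-1-b_j(t_k)\bigr)\le k-1$ for every $j$: late in the game all the $b_j$'s are already close to $s-1$. The paper combines this with Lemma~\ref{lemma:increments}, which bounds how long $v_i$ can remain equal to its last update time $t^*$ in terms of $i$ and $b^*=b_i(t^*)$, and a two-case analysis (small $i$, where $b^*=s-1$ and only the last $k-1$ indices can still be incremented; large $i$, where the product bound gives $h+c^*\le k-1$ after substituting $h=r-1-i$, $c^*=s-1-b^*$) to conclude $t_k - v_i(t_k)\le (k-1)(s-1)$ uniformly in $i$, which is exactly $v_i(t_k)\ge rs-ks+s$. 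Controlling the indices $j>i$ via this product bound is precisely what your counting (which only charges stages to indices $m<i$) cannot do.
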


Since these three claims prove that Builder will win the  game using only edges from $CT(r,s)$, we conclude that there is no coloring of $CT(r,s)$ avoiding both a red $P_r$ and blue $P_s$. That is, $CT(r,s)\arrows (P_r,P_s)$, as desired.

\subsection{Proofs of technical claims}

We begin with the following lemma:

\begin{lemma}\label{lemma:increments}
Suppose that in stage $t^*$, Builder sets $v_i(t^*) = t^*$ and $b_i(t^*) = b^*$. If  we still have $v_i(t) = t^*$ after stage $t > t^*$, then we must have
\[
	t \le t^* + i(s-1-b^*) + (r-1-i)b^*.
\]
\end{lemma}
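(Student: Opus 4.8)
The plan is to track how the index $i$ "slot" evolves over the stages $t^* < t' \le t$, and to show that at each stage the value $b_{i}$ can only be updated in one of two ways, each of which consumes one unit of a bounded resource. Concretely, fix the slot index $i$. At stage $t^*$ we have $v_i(t^*) = t^*$ and $b_i(t^*) = b^*$. As long as $v_i$ is not overwritten by a later stage, nothing changes; so the claim is really about the stages $t'$ in $(t^*, t]$ at which $v_i(t')$ changes value. By hypothesis $v_i$ is still $t^*$ after stage $t$, so in fact $v_i$ never changes in $(t^*, t]$ — wait, that is too strong. Let me restate: the hypothesis is $v_i(t) = t^*$, meaning $v_i$ has \emph{not} been updated between stages $t^*$ and $t$. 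So the quantity $b_i$ is frozen at $b^*$ throughout. The content of the lemma is therefore a counting bound on how many stages can pass \emph{without} slot $i$ being touched, given the update rules.

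The key observation is the following dichotomy about what happens at each stage $t' \in (t^*, t]$. When Builder queries the edges $v_j(t'-1)\,t'$, let $j^*$ be the least index with a blue edge or an undefined vertex. Slot $i$ is left alone precisely when $j^* \ne i$ and $i$ is not caught by the post-processing step — i.e., either $j^* < i$ and $b_i(t'-1) > b_{j^*}(t')$ (so $i$ is too "blue-heavy" to be overwritten from the left), or $j^* > i$ (so the update and post-processing happen entirely to the right of slot $i$). In the first case, the blue edge $v_{j^*}\,t'$ means slot $j^*$'s blue-length strictly increases; since $j^* < i \le i$ slots and each has blue-length capped at $s-1$, the number of such "left" stages is bounded. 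In the second case, $j^* > i$ means the edges $v_0\,t', \dots, v_i\,t'$ were all colored \emph{red}; this is the step where slot $i$ "participates in building a red path to the right", and we charge it against the red-length budget of the slots $i+1, \dots, r-1$ (or the fact that $i+1$ slots were all red simultaneously). Each such stage increments the red-progress of slot $i$ in some accounting sense — more precisely, the edges below slot $i$ being red contributes to extending the red structure, bounded by $(r-1-i)$ further red extensions each of which can also pick up $b^*$ blue steps.

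To make this precise I would define a potential function on the configuration after stage $t'$ — something like $\Phi(t') = \sum_{j \le i} b_j(t') + (\text{red progress of slots above } i)$ — and show each stage in $(t^*, t]$ increases $\Phi$ by at least one, while $\Phi$ is bounded above by $i(s-1-b^*) + (r-1-i)b^* + (\text{initial value})$. The two terms $i(s-1-b^*)$ and $(r-1-i)b^*$ should come out naturally: the first $i$ slots can each absorb $s-1-b^*$ more blue steps beyond the baseline $b^*$ (here one uses that, by the post-processing invariant and the ordering of the $v_j$, slots $0,\dots,i$ all have blue-length at least $b^*$ at stage $t^*$, since $b_j$ is nondecreasing in... actually nonincreasing in $j$ — I'd need to pin down the monotonicity of the tuple $(b_0,\dots,b_{r-1})$ first), and the upper slots $i+1,\dots,r-1$ start with some bounded blue-length that, combined with their bounded red contribution, gives the $(r-1-i)b^*$ term.

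The main obstacle I anticipate is getting the bookkeeping exactly right, in particular establishing and using the correct monotonicity of the tuple $(b_0, b_1, \dots, b_{r-1})$ after the post-processing step — the post-processing is specifically designed to enforce a monotonicity ($b_j \ge b_i$ whenever $j < i$ and $v_j = v_i$, or some such) that makes the "left" stages and "right" stages cleanly separable. A secondary subtlety is verifying that a stage at which slot $i$ is untouched really does fall into exactly one of the two cases above and genuinely advances the potential; the degenerate cases (slots undefined, or ties $b_i(t'-1) = b_{j^*}(t')$ which trigger post-processing and would overwrite slot $i$, contradicting the hypothesis) need to be checked to confirm they cannot occur in $(t^*, t]$. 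Once the potential is set up correctly, the arithmetic collapsing it to $t^* + i(s-1-b^*) + (r-1-i)b^*$ should be routine.
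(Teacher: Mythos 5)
Your overall plan coincides with the paper's: count the stages in $(t^*,t]$, note that each such stage increments some $b_j$ while $b_i$ stays frozen, and bound separately the increments of slots below $i$ and above $i$ using the post-processing rule. However, your write-up has the post-processing pointing the wrong way, and this is not cosmetic. In Builder's strategy, when the least index $j^*$ with a blue or undefined edge is updated, post-processing overwrites slots $j$ with $j<j^*$ and $b_j(t'-1)\le b_{j^*}(t')$. So slot $i$ is automatically untouched when $j^*<i$, and it is precisely when $j^*>i$ that slot $i$ is threatened (and survives only if $b_i(t'-1)>b_{j^*}(t')$). Your dichotomy asserts the reverse, and your proposal is internally inconsistent on this point: near the end you correctly observe that a tie $b_i(t'-1)=b_{j^*}(t')$ would trigger an overwrite of slot $i$, which only makes sense with $j^*>i$.

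This matters because the threatened case is exactly where the term $(r-1-i)b^*$ comes from, and your accounting for it (a ``red-length budget'' with red extensions ``picking up $b^*$ blue steps'') is not the right mechanism. The correct argument is: for each $j>i$, any increment of $b_j$ during $(t^*,t]$ must leave $b_j$ strictly below $b^*=b_i$, since otherwise post-processing would set $v_i$ to the current stage, contradicting $v_i(t)=t^*$; hence each such $b_j$ can be incremented at most $b^*$ times (its value runs from at least $-1$ up to at most $b^*-1$), for a total of $(r-1-i)b^*$ stages. For $j<i$, the post-processing performed at stage $t^*$ itself guarantees $b_j(t^*)\ge b^*$, and since $b_j\le s-1$ always, each contributes at most $s-1-b^*$ increments, for a total of $i(s-1-b^*)$. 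Since $b_i$ never increments in $(t^*,t]$ and every stage increments some $b_j$ (or Builder wins), $t-t^*\le i(s-1-b^*)+(r-1-i)b^*$ follows directly; no potential function is needed. As written, your proposal leaves the central bound on the upper slots unproved (it is buried as a ``degenerate case to check'') and, with the transposed dichotomy, the bookkeeping you outline would not assemble into the stated inequality without first fixing the direction of the post-processing rule and the monotonicity $b_0\ge b_1\ge\cdots\ge b_{r-1}$ that it maintains.
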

\begin{proof}
For each $j<i$, we have $b_j(t^*) \ge b^*$. There can be at most $s-1-b^*$ stages at which $b_j$ increases before Builder's victory, because $b_j(t) \le s-1$. Altogether, there are at most $i(s-1-b^*)$ stages in the interval $(t^*, t]$ at which any $b_j$ for $j<i$ is increased.

For each $j>i$, we have $b_j(t^*) \le b^*$, and in order to have $v_i(t) = t^*$, one of two possibilities must hold:
\begin{itemize}
\item $b_j(t) = b_j(t^*) = b^*$, and $b_j$ is never increased.

\item $b_j(t) < b^*$, and $b_j$ can be increased at most $b^*$ times: starting from $-1$ to at most $b^*-1$.
\end{itemize}
Altogether, there are at most $(r-1-i)b^*$ stages in the interval $(t^*, t]$ at which any $b_j$ for $j>i$ is increased.

However, at least one $b_j$ must increase at each stage in the interval $(t^*, t]$. Therefore the number of stages, $t - t^*$, is at most $i(s-1-b^*) + (r-1-i)b^*$, proving the lemma.
\end{proof}

\begin{proof}[Proof of Claim~\ref{claim:early-game}]

We begin by showing that at each stage $t$, for every $i$ such that $v_i(t)$ is defined, we have $i + b_i(t) \le t-1$.

To show this, we induct on $t$. When $t=0$, the claim holds trivially: none of the vertices $v_i(0)$ are defined.

At stage $t$, we either define a new $v_i(t)$ and set $b_i(t) = 0$, or set $b_i(t) = b_i(t-1) + 1$ for some $i$. In the first case, we must have $i \le t-1$, since only $t$ vertices are considered in step $t$, so no red path of length $t$ or greater can be found. In the second case, we have $i + b_i(t-1) \le t-2$ by the inductive hypothesis, so $i + b_i(t) \le t-1$. 

Finally, $b_j(t)$ may change for some values of $j< i$ in the ``post-processing" step, when we set $b_j(t) = b_i(t)$ if $j<i$ and $b_j(t-1) \le b_i(t)$. However, this step cannot cause $j + b_j(t)$ to violate the inequality, because $j + b_j(t) < i + b_j(t) = i + b_i(t) \le t-1$.

Now we are ready to proceed to the main proof. Take a positive $k \le r$, and suppose $t^*= v_i(t) \le k$; our goal is to show $t \le ks-s+1$. Let $b^* = b_i(t^*)$; by our observation earlier, $i + b^* \le t^* - 1 \le k-1$.

By Lemma~\ref{lemma:increments}, if $v_i(t) = t^*$, then
\[
	t \le t^* + i(s-1-b^*) + (r-1-i)b^*.
\]
Because $r \le s$ and $b^* \ge 0$, we have $(r-1-i)b^* \le (s-1-i)b^*$. Therefore
\begin{align*}
t	&\le t^* + i(s-1-b^*) + (s-1-i)b^* = t^* + (i + b^*)(s-1) - 2ib^* \\
	&\le k + (k-1)(s-1) - 0 = (k-1)s + 1.
\end{align*}
This completes the proof.
\end{proof}

\begin{proof}[Proof of Claim~\ref{claim:middle-game}]
Take any $k \in (r,s]$, and suppose $t^*=v_i(t)  \le k$; let $b^* = b_i(t^*)$. Our goal is to show that $t \le (r-1)s+1$.

By Lemma~\ref{lemma:increments},
\[
	t \le t^* + i(s-1-b^*) + (r-1-i)b^*.
\]
We must have $0 \le b^* \le s-1$. For any fixed $b^*$, the right-hand side of this inequality is linear in $i$, and we have $0 \le i \le r-1$. Therefore the expression is maximized either when $i=0$ and it is $t^* + (r-1)b^* \le t^*+(r-1)(s-1)$, or when $i=r-1$ and it is $t^*+(r-1)(s-1-b^*) \le t^*+(r-1)(s-1)$. 

In both cases,
\[
	t \le t^* + (r-1)(s-1) \le s + (r-1)(s-1) \le (r-1)s+1,
\]
proving the claim.
\end{proof}

\begin{proof}[Proof of Claim~\ref{claim:late-game}]
Let $t_k = rs-k+1$. Let $t^* = v_i(t_k)$ and $b^* = b_i(t^*)$. Note that at every stage $t$ when $v_i$ or $b_i$ change, we set $v_i(t) = t$; therefore we also have $b^* = b_i(t)$ for all $t \in [t^*, t_k]$. Our goal is to show that $t^* \ge rs-ks+s = t_k - (k-1)(s-1)$ or, equivalently,
\begin{equation}
	\label{eq:late-game}
	t_k \le t^* + (k-1)(s-1).
\end{equation}
The sum $\sum_{j=0}^{r-1} b_j(t)$ starts at $-r$ when $t=0$; at each stage, it increases by at least $1$. Therefore at stage $t_k$, it must satisfy
\[
	\sum_{j=0}^{r-1} b_j(t_k) \ge t_k - r = r(s-1)-k+1.
\]
Because $s-1 \ge b_0(t_k) \ge \ldots \ge b_{r-1}(t_k)$, we also have
\[
	j(s-1) + (r-j) b_j(t_k) \ge r(s-1)-k+1
\]
for any $j$, which can be rewritten as
\begin{equation}
	\label{eq:product-bound}
	(r-j)(s-1 - b_j(t_k)) \le k-1.
\end{equation}
We complete the proof by considering two cases.

\textbf{Case 1:} $i \le r-k$.

In relation~\eqref{eq:product-bound}, when $j \le r-k$, the first factor on the left-hand side exceeds $k$, and therefore the second factor must be $0$. Therefore $b_j(t_k) = s-1$ for all $j \le r-k$. In particular, $b^* = s-1$.

We must have $b_j(t^*) = s-1$ for $j < i$ by monotonicity. However, we must also have $b_j(t^*) = s-1$ for $i < j \le r-k$, since  $b_j(t_k) = s-1$ for such $j$, and if $b_j(t)$ was updated to $s-1$ at some stage $t \in (t^*, t_k]$, then $v_i(t)$ would also be updated in the post-processing step. In that case, we would have $v_i(t_k) = t > t^*$, contrary to our assumption.

Therefore at stages $t \in (t^*, t_k]$, only $b_j(t)$ for $t = r-k+1, \dots, r-1$ can be updated. Each of them can be updated at most $s-1$ times: none of them can reach $s-1$, or else $v_i(t)$ will be updated, which is again a contradiction.

However, at each stage $t \in (t^*, t_k]$, at least one update occurs. Therefore
\[
	t_k - t^* \le (r-1 - (r-k))(s-1) = (k-1)(s-1)
\]
and therefore $t_k \le t^* + (k-1)(s-1)$, proving~\eqref{eq:late-game}.

\textbf{Case 2:} $i \ge r-k+1$.

We make the substitution $h = r-1-i$ and $c^* = s-1-b^*$; note that $h \ge 0$ and $c^* \ge 0$. Since $i \ge r-k+1$, we have $h \le k-2$.

Setting $j=i$ in~\eqref{eq:product-bound}, we get $(r-i)(s-1-b^*) \le k-1$, or 
\[
	(h+1)c^* \le k-1.
\]
If $c^* = 0$, we have $h + c^* \le k-2 < k-1$; if $c^* \ge 1$, then $h + c^* \le hc^* + c^* \le k-1$. Therefore we always have $h + c^* \le k-1$.

By Lemma~\ref{lemma:increments},
\begin{align*}
t_k &\le t^* + i(s-1-b^*) + (r-1-i)b^* 
	= t^* + (r-1-h)c^* + h(s-1-c^*)\\
	&\le t^* + (s-1-h)c^*+ h(s-1-c^*)
	= t^* + (c^* + h)(s-1) - 2hc^*\\
	&\le t^* + (k-1)(s-1) - 0
\end{align*}
and we have shown \eqref{eq:late-game} again.
\end{proof}

\section{Conclusion}

The results of Theorems~\ref{thm:onlineupperbound} and~\ref{thm:onlinelowerbound} reduce the gap between the bounds on $r_o(P_n, P_n)$, but there is still a considerable gap here; the lower bound is $\Omega(n \log n)$ while the upper bound is $O(n^2 \log n)$. The most natural direction for further study is to ask: which of these bounds is closer to the truth?

Additionally, when the Erd\H os--Szekeres Theorem is interpreted as a statement about monotone subsequences, there is a corresponding online version of the question: how many comparisons need to be done on a sequence to find a monotone increasing subsequence of length $r+1$ or a monotone decreasing subsequence of length $s+1$? The worst-case analysis of this problem is a variant of the online size Ramsey number $r_o(P_r, P_s)$ that puts an additional limitation on Painter: the results of comparisons must obey transitivity.

In principle, there could be a gap between the number of comparisons in this problem, $r_o(P_r,P_s)$, and our variant $r_o^*(P_r, P_s)$ in which the number of vertices is fixed. However, all of our bounds apply to these problems equally. It would be interesting to determine if these problems do in fact have the same answer for all $r$ and $s$.

\section*{Acknowledgements}
We thank the anonymous referees for their many useful comments and suggestions. This research was performed while the third and fourth authors were at the University of Illinois Urbana-Champaign.

\bibliographystyle{abbrv}
\bibliography{ordered}

\end{document}